\documentclass[11pt]{article}
\usepackage[utf8]{inputenc}
\usepackage{lmodern}
\usepackage{subfiles}
\usepackage{enumitem}
\setenumerate{topsep=6pt,ref={\normalfont(\roman*)},label={\normalfont(\roman*)}, itemsep=0pt} %
\usepackage{pgfplots}
\pgfplotsset{compat=1.18}
\usepgfplotslibrary{groupplots}
\usepackage{amsfonts}
\usepackage{amsthm}
\usepackage{amsmath}
\usepackage{amssymb}
\usepackage{amscd}
\usepackage{mathrsfs}
\usepackage{mathtools}
\usepackage{bbm}
\usepackage{esint}

\usepackage[margin=3cm]{geometry}
\usepackage{setspace}
\usepackage{indentfirst}
\usepackage{graphicx}
\usepackage{graphics}
\usepackage{lscape}
\usepackage{pgf,tikz}
\usepackage{tikz-cd}
\usepackage{color}
\usepackage{pict2e}
\usepackage{epic}
\usepackage{epstopdf}
\usepackage{titlesec, titlefoot}
\titleformat{\section}[block]{\Large\bfseries\filcenter}{\thesection}{1em}{}
\usepackage{commath}
\usepackage{float}
\usepackage{caption}
\usepackage{etoolbox}
\usepackage[affil-it]{authblk}
\usepackage{combelow}

\usepackage[hidelinks, bookmarksdepth=3]{hyperref}
\hypersetup{bookmarksopen=true} 
\usepackage{hypcap}

\graphicspath{{./}}
\allowdisplaybreaks

\expandafter\def\expandafter\normalsize\expandafter{%
\normalsize
\setlength\abovedisplayskip{6pt}
\setlength\belowdisplayskip{6pt}
\setlength\abovedisplayshortskip{6pt}
\setlength\belowdisplayshortskip{6pt}
}

\theoremstyle{plain}

\renewcommand*\thesection{\arabic{section}}
\numberwithin{equation}{section} 

\newtheorem{theorem}{Theorem}[section]
\newtheorem{lemma}[theorem]{Lemma}
\newtheorem*{lemma*}{Lemma}
\newtheorem{proposition}[theorem]{Proposition}

\theoremstyle{definition}

\newtheorem{remark}[theorem]{Remark}

\expandafter\let\expandafter\oldproof\csname\string\proof\endcsname
\let\oldendproof\endproof
\renewenvironment{proof}[1][\proofname]{%
\oldproof[\upshape \bfseries #1]%
}{\oldendproof}

\makeatletter
\def\@makechapterhead#1{%
\vspace*{50\p@}%
{\parindent \z@ \raggedright \normalfont
\interlinepenalty\@M
\Huge\bfseries  \thechapter.\quad #1\par\nobreak
\vskip 40\p@
}}
\makeatother

\newcommand{\eps}{\varepsilon}

\DeclareMathOperator{\cp}{Cap}

\DeclareMathOperator{\diam}{diam}

\DeclareMathOperator{\Lip}{Lip}

\def \Om{\Omega}
\def \R {\mathbb{R}}

\def \T{\mathbb{T}}

\def \p{\partial}
\def \mc{\mathcal}
\def \mb{\mathbb}

\begin{document}

	\title{\textbf{Logarithmic capacity and torsion in planar shape optimization}}
	
	\author[1]{{\Large David Ziener}}
	
	\affil[1]{\small Max Planck Institute for Mathematics in the Sciences, Inselstrasse 22, 04103 Leipzig, Germany
	\protect\\
	{\tt{david.ziener@mis.mpg.de}}\ }
	
	\date{}
	
    \maketitle

	\begin{abstract}
	We study a family of functionals involving a product of logarithmic capacity and torsional rigidity. Using several geometric inequalities, we identify the discs as the only maximizers among planar convex sets for certain values of the parameters. Finally, we prove regularity of maximizers in the non-critical case. This is done by establishing a shape-differentiability theorem for the logarithmic capacity in the class of planar convex bodies.
	\end{abstract}

    \medskip

    \noindent\textbf{Mathematics Subject Classification.}
    49Q10, 31A15, 52A40, 49J50.

    \smallskip

    \noindent\textbf{Keywords and phrases.}
    Shape optimization; logarithmic capacity; torsional rigidity; shape derivatives; regularity of optimal shapes.
	
	\unmarkedfntext{
	\hspace{-0.75cm}
	\emph{Acknowledgments.} 	DZ would like to thank his PhD advisor László Székelyhidi for fruitful discussions regarding this work and Jonas Hirsch for helpful comments on an earlier version of this manuscript.
	}
	
	\vspace{1cm}

    \section{Introduction}
    Consider the variational problem
    \begin{equation}
        \label{eq:abstractShape}
        \sup\{J(\Om):\Omega \in \mc A\},
    \end{equation}
    where $\mc A$ is a class of admissible sets in $\R^n$ and $J:\mc A \to \R$ a shape functional. In this context there are two natural questions. First, does there exist an optimal shape that attains the supremum in \eqref{eq:abstractShape}? Second, if we already know the existence of an optimal shape, is there a way to prove qualitative properties of the set? In the ideal case one can uniquely identify the set, for example, as a ball. In situations where such a unique identification is not possible, one can try to describe the set in a qualitative way, for example through its regularity.\\
    Questions of this form are quite classical and gave rise to a large number of isoperimetric inequalities. The classical isoperimetric inequality characterizes the disc as the unique minimizer of perimeter among planar sets of prescribed area. Its PDE analogue for torsion is Saint-Venant’s inequality \cite{SaintVenant1856Torsion}: among sets of prescribed area, the disc maximizes the torsional rigidity. In planar potential theory, the corresponding isocapacitary inequality states that the disc minimizes logarithmic capacity under an area constraint. Together with the Faber–Krahn inequalities, these results are part of the symmetrization theory developed by Pólya and Szegő \cite{Plya1948TorsionalRP,polya1951isoperimetric}.\\
    Modern shape optimization extends this classical theory in several directions. A central issue is to identify topologies in which minimizing or maximizing sequences are compact and the relevant shape functionals are semicontinuous; general existence results of this kind were established by Buttazzo and Dal Maso and subsequently developed in the monograph of Bucur and Buttazzo \cite{ButtEx,bucur:hal-00414080}. Another direction concerns the regularity of optimal sets, particularly when perimeter terms are present \cite{lamboley2012regularity,DEPHILIPPIS2018147}. Problems involving several geometric quantities are also studied through their Blaschke–Santaló diagrams \cite{lucardesi2022blaschke}, and through the direct optimization of products containing torsional rigidity, capacity, perimeter and volume \cite{https://doi.org/10.1112/blms.12422,briani2021some,van2023some}.\\
    In this paper we are interested in a specific family of shape functionals defined for bounded, open $\Om\subset \R^2$ as
    \[
    H_\alpha(\Om):=\frac{T(\Om)^{1/2}\cp(\overline \Om)}{|\Om|^\alpha P(\Om)^{3-2\alpha}},\quad \alpha \in [0,3/2].
    \]
    Here $T(\Om)$ is the torsional rigidity, $\cp(\overline \Om)$ is the logarithmic capacity and $P(\Om)$ is the perimeter. For the precise definitions we refer to Section 2. This functional was studied in \cite{van2023some}. In \cite{van2023some} the authors considered the variational problem
    \begin{equation}
        \label{eq:IntroVarProb}
        \sup\{H_\alpha(\Om): \Om\subset \R^2 \text{ non-empty, open, bounded, convex}\},
    \end{equation}
    and proved existence of a maximizer in the regime $\alpha \in [0,3/2)$, see Theorem 7 in \cite{van2023some}. For the critical case $\alpha=3/2$, existence of a maximizer was shown among sets which are suitably close in the sense of a volume constraint to being an ellipse, see Theorem 5 in \cite{van2023some}.\\
    The goal of this paper is to further understand the properties of maximizers of \eqref{eq:IntroVarProb}. We do so by two different approaches. \\
    First, in Section 3, we use some known global geometric inequalities to prove existence of a maximizer for \eqref{eq:IntroVarProb} in the critical case $\alpha=3/2$. The loss of compactness, which is a priori present in this variational problem, is overcome through the Makai inequality, Proposition \ref{prop:Makai}. In Section 4, again by using global geometric inequalities, we identify a regime of parameters $\alpha \in [0,\alpha^*]$ for some $\alpha^*>1$, where the disc is the unique maximizer.\\
    Finally, in Section 5, we prove in the regime $\alpha \in [0,3/2)$ the $C^{1,1}$ regularity of maximizers. This is done by employing an abstract result of Lamboley, Novruzi and Pierre \cite{lamboley2012regularity}. To use this abstract result, we prove that the logarithmic potential energy, or equivalently the logarithmic capacity, is $C^1$ regular as a shape functional on Lipschitz perturbations. Moreover, we express the derivative through the density of the equilibrium measure for the logarithmic potential energy. These results extend the higher-dimensional analogues (see for instance \cite{Jerison,crasta2005long}) and might be of independent interest. 
    
    \section{Preliminaries}
    We denote the open unit disc as
    \[
    \mb D:=\{x\in \R^2:|x|<1\}.
    \]
    Let $\Om \subset \R^2$ be bounded and open. The \textit{torsion function} $u_\Om \in H^1_0(\Om)$ is the weak solution of
    \[
        -\Delta u_\Om=1,\quad \text{in }\Om.
    \]
    The \textit{torsional rigidity} or \textit{torsion} is defined as
    \[
    T(\Om)=\int_{\Om}u_\Om.
    \]
    We denote for $t>0$ the set $t\Om:=\{tx:x\in\Om\}$. Then one has the scaling
    \begin{equation}
        \label{scalingTorsion}
        T(t\Om)=t^4T(\Om),\quad t>0.
    \end{equation}
    There are two central inequalities for the torsional rigidity. The first one was conjectured by Saint-Venant in \cite{SaintVenant1856Torsion} and shown by Pólya in \cite{Plya1948TorsionalRP}.
    \begin{proposition}[Saint-Venant]
        \label{prop:SaintVenant}
        Let $\Om \subset \mb R^2$ be bounded and open. Denote by $\Om^*$ the disc with $|\Om^*|=|\Om|$, then $T(\Om)\leq T(\Om^*)$.
    \end{proposition}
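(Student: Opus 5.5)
The plan is to prove Saint-Venant's inequality by Schwarz symmetrization combined with the variational characterization of torsion. First I rewrite $T(\Om)$ as a maximum. For $v\in H^1_0(\Om)$, testing the weak equation $-\Delta u_\Om=1$ with $v$ gives $\int_\Om \nabla u_\Om\cdot\nabla v=\int_\Om v$. This yields
\[
T(\Om)=\max_{v\in H^1_0(\Om)}\Big(2\int_\Om v-\int_\Om|\nabla v|^2\Big),
\]
with the maximum attained exactly at $v=u_\Om$. To see this, note that the functional $E(v)=2\int v-\int|\nabla v|^2$ is strictly concave. Its Euler–Lagrange equation is $-\Delta v=1$, so $E(u_\Om)=2T(\Om)-\int|\nabla u_\Om|^2=T(\Om)$, since testing with $v=u_\Om$ gives $\int|\nabla u_\Om|^2=\int u_\Om=T(\Om)$.

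Next I apply Schwarz symmetrization to the maximizer. By the weak maximum principle, $u_\Om\ge 0$. Let $u^*$ be its symmetric decreasing rearrangement, which is defined on $\Om^*$ with $|\Om^*|=|\Om|$. Two standard facts about $u^*$ are used. First, $u^*$ is equimeasurable with $u_\Om$, so $\int_{\Om^*}u^*=\int_\Om u_\Om$. Second, the Pólya–Szegő inequality gives $u^*\in H^1_0(\Om^*)$ and $\int_{\Om^*}|\nabla u^*|^2\le\int_\Om|\nabla u_\Om|^2$. Combining these with the maximum characterization on $\Om^*$,
\[
T(\Om)=2\int_\Om u_\Om-\int_\Om|\nabla u_\Om|^2\le 2\int_{\Om^*}u^*-\int_{\Om^*}|\nabla u^*|^2\le T(\Om^*).
\]

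The main obstacle is the Pólya–Szegő inequality itself, which carries the real geometric content. I would prove it via the coarea formula. Set $\mu(t)=|\{u>t\}|$. Applying Cauchy–Schwarz on each level set,
\[
P(\{u>t\})^2\le\Big(\int_{\{u=t\}}|\nabla u|^{-1}\Big)\Big(\int_{\{u=t\}}|\nabla u|\Big)=(-\mu'(t))\int_{\{u=t\}}|\nabla u|.
\]
The classical isoperimetric inequality gives $P(\{u>t\})\ge P(\{u^*>t\})$. For $u^*$ the Cauchy–Schwarz step is an equality, because $|\nabla u^*|$ is constant on circles. Integrating in $t$ via coarea then gives the gradient comparison. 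The technical points are the null set of critical values where $\nabla u=0$ and the absolute continuity of $\mu$. I would handle these by first working with smooth approximations, or by using the Brothers–Ziemer framework, and then passing to the limit using lower semicontinuity of the Dirichlet energy.
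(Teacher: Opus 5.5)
The paper does not prove this statement; it quotes it as a classical result from P\'olya \cite{Plya1948TorsionalRP}. Your argument (the Dirichlet principle $T(\Om)=\max_{v\in H^1_0(\Om)}\bigl(2\int_\Om v-\int_\Om|\nabla v|^2\bigr)$ plus Schwarz symmetrization and the P\'olya--Szeg\H{o} inequality) is the standard symmetrization proof behind that citation, and it is correct: in the level-set step you only need $-\mu'(t)\geq\int_{\{u=t\}}|\nabla u|^{-1}\,d\mc H^1$, which the coarea formula gives in general, and the approximation argument you describe takes care of the remaining technicalities.
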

    The second inequality, which is due to Makai \cite{Makai1962}, needs a convexity assumption.
    \begin{proposition}[Makai]
        \label{prop:Makai}
        Suppose $\Om \subset \R^2$ is bounded, open, and convex, then
        \[
        T(\Om)\leq \frac23 \frac{|\Om|^3}{P(\Om)^2}.
        \]
    \end{proposition}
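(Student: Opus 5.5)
Since Proposition~\ref{prop:Makai} is classical (Makai \cite{Makai1962}), the paper most likely just cites it. If I had to prove it, I would use a level-set argument for the torsion function combined with a maximum principle for an auxiliary \emph{P-function}. I have not checked that the final one-dimensional estimate closes with the constant $\tfrac23$, and I expect that step to be the main obstacle.

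\textbf{Setup.} Let $u=u_\Om$, $M=\max u$, and $\mu(s)=|\{u>s\}|$ for $0\le s\le M$. Let $\ell(s)$ denote the perimeter of $\{u>s\}$. I would use the following facts.
\begin{itemize}
\item By the layer-cake formula, $T(\Om)=\int_0^M\mu(s)\,ds$.
\item For convex $\Om$, $u^{1/2}$ is concave (Makar-Limanov). Hence every superlevel set $\{u>s\}$ is convex, and these sets are nested. Since perimeter is monotone under inclusion of convex sets, $\ell(s)\le P(\Om)$.
\item Integrating $-\Delta u=1$ over $\{u>s\}$ gives $\int_{\{u=s\}}|\nabla u|=\mu(s)$.
\item By the coarea formula, $-\mu'(s)=\int_{\{u=s\}}|\nabla u|^{-1}$.
\item Consider the P-function $|\nabla u|^2+2u$. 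In convex planar domains it attains its maximum at the point where $\nabla u=0$. This gives the pointwise bound $|\nabla u|^2\le 2(M-u)$.
\end{itemize}

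\textbf{Key steps.} On the level line $\{u=s\}$, the gradient bound gives $|\nabla u|\le\sqrt{2(M-s)}$. Inserting this into the coarea identity yields
\[
-\mu'(s)\ \ge\ \frac{\ell(s)}{\sqrt{2(M-s)}} .
\]
In the same way, the divergence identity yields $\mu(s)\le \ell(s)\sqrt{2(M-s)}$. In particular $|\Om|\le P(\Om)\sqrt{2M}$, and integrating the first inequality controls $|\Om|$ from below by $P$ and $M$. Combining these with $T=\int_0^M\mu$ and the monotonicity of $\ell$, I would reduce the problem to a one-dimensional extremal problem. The unknown is a decreasing profile $\mu$ with $\mu(0)=|\Om|$, the constraint is $\ell\le P$, and the goal is to show $\int_0^M\mu\le \tfrac23|\Om|^3/P^2$. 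The factor $\tfrac23$ should come from the integral $\int_0^1(1-\tau)^{1/2}d\tau=\tfrac23$ produced by the square-root profile $\sqrt{M-s}$. The equality case is approached by thin degenerating domains, where $u$ is essentially a one-dimensional parabola across the width.

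\textbf{Main obstacle.} The hard step is the final one-dimensional estimate: obtaining exactly $\tfrac23$ rather than a cruder constant. The inequalities above bound $\ell(s)$ only from above by $P$. However, the estimate seems to need $\ell(s)$ to stay comparable to $P$ where $\mu(s)$ is large, since a small $\ell(s)$ would force $\mu(s)$ to be small. My first attempt would be to pair $\ell(s)$ against $\mu(s)$ through the Cauchy--Schwarz inequality $\ell(s)^2\le\mu(s)\,(-\mu'(s))$ and solve the resulting differential inequality for $\mu$.

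If this fails, I would fall back on Makai's original route. It uses the inner parallel sets $\Om_t=\{d>t\}$, where $d$ is the distance to $\partial\Om$. For convex $\Om$ these sets satisfy $|\Om_t|'=-P(\Om_t)$ and $P(\Om_t)\ge P(\Om)-2\pi t$. The idea is to compare $u$ with the one-dimensional slab torsion $d(w-d)/2$ along normals, where $w$ is the local width. The upper bound then reduces to an explicit computation over $t\in(0,r)$, with $r$ the inradius.
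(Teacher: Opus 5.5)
The paper does not prove this proposition; it only cites Makai. Your proposal does not prove it either. The step you flag is not merely unchecked: it cannot succeed with the information you feed into it. Your preliminary facts are correct, but they bound $\ell(s)$ only from above, whereas an upper bound on $T$ in terms of $P$ needs the level lines to stay long.

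Here is a concrete profile showing this. Take $|\nabla u|\equiv\sqrt{2(M-s)}$ on $\{u=s\}$ and $\ell(s)\equiv\ell_0\le P$ for $s>0$.
\begin{itemize}
\item It satisfies every relation you list; the gradient bound and the Cauchy--Schwarz inequality $\ell^2\le\mu(-\mu')$ even hold with equality.
\item It gives $\mu(s)=\ell_0\sqrt{2(M-s)}$, $M=|\Om|^2/(2\ell_0^2)$ and $T=\int_0^M\mu=|\Om|^3/(3\ell_0^2)$.
\item This exceeds $\frac23|\Om|^3/P^2$ whenever $\ell_0<P/\sqrt2$. That is compatible even with the isoperimetric constraint $\ell_0^2\ge 4\pi|\Om|$ once $P^2>8\pi|\Om|$.
\end{itemize}
So the one-dimensional extremal problem has the wrong value.

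What your relations do yield is $-\mu'\ge \ell/\sqrt{2(M-s)}\ge \mu/(2(M-s))$. Hence $\mu(s)\le|\Om|\sqrt{1-s/M}$ and $T\le\frac23|\Om|M$. This is where $\int_0^1(1-\tau)^{1/2}d\tau$ appears, but that inequality is sharp on thin rectangles. Turning it into Makai's inequality would require $M\le|\Om|^2/P^2$, which fails for thin isosceles triangles, where $MP^2/|\Om|^2\to 2$.

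The two constants $\frac23$ are unrelated. Payne's bound $|\nabla u|^2\le 2(M-u)$ is sharp only on constant-width profiles, i.e.\ thin rectangles, where $TP^2/|\Om|^3\to\frac13$. Makai's constant is approached by thin triangles, whose width grows linearly, and it comes from Berwald's reverse H\"older inequality for concave functions.

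The fallback contains a false step. The inequality $P(\Om_t)\ge P(\Om)-2\pi t$ is reversed: $\Om_t+t\mb D\subset\Om$ gives $P(\Om_t)\le P(\Om)-2\pi t$. This is strict for every triangle, where $P(\Om_t)=P(\Om)(1-t/r)$ and $P/r>2\pi$. Moreover, the maximum principle justifies the slab comparison $u\le d(w-d)/2$ only with $w$ the width of $\Om$ in the normal direction. Already on thin triangles this gives the constant $1$, not $\frac23$.

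What is needed is an energy upper bound combined with a concavity that feeds Berwald's inequality. One route that closes:
\begin{enumerate}
\item For a convex polygon, let $\tau(y)$ be the distance from $y\in\p\Om$ along the inner normal $n$ to the ridge, i.e.\ the set where the nearest side changes.
\item The field $\sigma(y+tn)=(\tau(y)-t)\,n$ satisfies $\mathrm{div}\,\sigma=-1$ in each normal region and vanishes on the ridge. So the complementary principle $T=\min\{\int_\Om|\sigma|^2:\mathrm{div}\,\sigma=-1\}$ gives $T\le\frac13\int_{\p\Om}\tau^3\,d\mc H^1$.
\item One has $\mc H^1(\{\tau>t\})=P(\Om_t)$. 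The map $t\mapsto P(\Om_t)$ is concave, by concavity of $d$ together with monotonicity and Minkowski additivity of perimeter on planar convex sets.
\item Hence the decreasing rearrangement $\tau^*$ of $\tau$ on $[0,P]$ is concave, with $\int_0^P\tau^*=|\Om|$.
\item Berwald's inequality $\int_0^P f^3\le\frac{2}{P^2}(\int_0^P f)^3$ for concave $f\ge0$ then gives exactly $T\le\frac23|\Om|^3/P^2$.
\item Polygons approximate a general convex $\Om$, which finishes the argument.
\end{enumerate}
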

    Let $K \subset \R^2$ be compact. We define the \textit{logarithmic potential energy} as
    \[
    I(K):=\inf_{\mu \in \mc P(K)}\int_{K\times K}\log(|x-y|^{-1})d\mu(x)d\mu(y).
    \]
    A set $K$ is called \textit{polar} if $I(K)=\infty$ and \textit{nonpolar} if $I(K)<\infty$. The \textit{logarithmic capacity} of $K$ is defined as
    \[
    \cp(K):=\exp(-I(K)).
    \]
    The scaling for the logarithmic capacity is
    \begin{equation}
        \label{scalingLogCap}
        \cp(tK)=t\cp(K),\quad t>0.
    \end{equation}
    We recall two geometric inequalities. The first one is classical. See for example \cite{duren2013menahem}.
    \begin{proposition}
        \label{prop:Polya}
    Let $\Om \subset \R^2$ be a Jordan domain with area $|\Om|=\pi$. Then $\cp(\overline \Om)\geq 1$ and it holds
    \begin{equation}
        \label{ineq:PolyaClass}
        \cp(\overline\Om)\leq \frac{P(\Om)}{2\pi}.
    \end{equation}
    Equality in \eqref{ineq:PolyaClass} holds if and only if $\Om=\mb D$ up to rigid motions.
    \end{proposition}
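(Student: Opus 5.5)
The plan is to pass to the exterior conformal map and read all three claims off its Laurent coefficients. Let $\Om$ be a Jordan domain with $|\Om|=\pi$, and let $G=\widehat{\mathbb C}\setminus\overline\Om$, which is simply connected in the Riemann sphere. By the Riemann mapping theorem there is a conformal map $f:\{|z|>1\}\to G$ with $f(\infty)=\infty$, normalized by
\[
f(z)=cz+b_0+\sum_{n\ge1}b_nz^{-n},\qquad c>0.
\]
The standard identification $\cp(\overline\Om)=c$ comes from the fact that $\log|f^{-1}|$ is the Green function of $G$ with pole at infinity. Its boundary flux gives the equilibrium measure, and its expansion $\log|z|-\log c+o(1)$ at infinity gives the Robin constant, so $I(\overline\Om)=-\log c$. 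By Carathéodory's theorem $f$ extends to a homeomorphism of $\{|z|\ge1\}$ onto $\overline G$, since $\p\Om$ is a Jordan curve.

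For the lower bound $\cp(\overline\Om)\ge1$, apply the area theorem (Grönwall) to $g=f/c$. Writing $f_r=f(re^{i\theta})$, the area enclosed by the image of $\{|z|=r\}$ for $r>1$ is
\[
\frac{1}{2i}\oint \bar f\,df=\pi\Big(c^2r^2-\sum_{n\ge1}n|b_n|^2r^{-2n}\Big).
\]
This enclosed region contains $\Om$. Letting $r\downarrow1$ gives
\[
\pi=|\Om|\le\pi\Big(c^2-\sum n|b_n|^2\Big)\le\pi c^2,
\]
hence $c\ge1$. The one technical point is justifying the limit $r\to1$. The curves $f(\{|z|=r\})$ are analytic Jordan curves whose interiors decrease and contain $\overline\Om$, so monotonicity suffices and no boundary regularity is needed.

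For the upper bound \eqref{ineq:PolyaClass}, I would use the Pólya–Szegő approach. The length of the level curve $C_r=f(\{|z|=r\})$ is
\[
L(r)=\int_0^{2\pi}|f'(re^{i\theta})|\,r\,d\theta .
\]
Since $f'$ is holomorphic and nonvanishing on $|z|>1$ and $f'(\infty)=c$, we have
\[
\frac1{2\pi}\int|f'(re^{i\theta})|\,d\theta\ \ge\ |f'(\infty)|=c.
\]
This follows because $\log|f'(1/w)|$ is harmonic in $|w|<1$, so its mean on circles equals $\log c$, and then Jensen's inequality (AM $\ge$ GM) applies. Hence $L(r)\ge2\pi rc$.

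The main obstacle is relating $P(\Om)$ to $\liminf_{r\to1}L(r)$. When $\p\Om$ is rectifiable, $f'$ lies in the Hardy space $H^1$, so $L(r)\to P(\Om)$ and therefore $P(\Om)\ge2\pi c$. When $P(\Om)=\infty$ the inequality is trivial. To avoid $H^1$ theory, I would first treat smooth (or convex, the case used later) domains and then approximate. In the convex case $C_r$ are convex curves decreasing to $\p\Om$, and perimeter is monotone under inclusion of convex sets, which gives $L(r)\ge P(\Om)$. That inequality points the wrong way, so I would instead use the Hardy space convergence $L(r)\to P(\Om)$ directly, which is the cleanest route.

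For the equality case, $P=2\pi c$ forces equality in AM–GM for every $r$. Then $|f'|$ is constant on circles, so $\log|f'|$ is a harmonic function with constant circle values, hence constant. Thus $f'$ is constant, $f$ is affine, and $\Om$ is a disc. With $|\Om|=\pi$ this disc is $\mb D$ up to translation. The converse holds because $\cp(\overline{\mb D})=1$ and $P(\mb D)=2\pi$.
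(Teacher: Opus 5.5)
The paper does not prove this proposition; it quotes it as classical and cites the literature. Your argument is the standard proof behind that citation, and its main parts are correct:
\begin{itemize}
\item For the lower bound, the area theorem for the exterior map gives $|\Om|\le \pi c^2$, hence $\cp(\overline\Om)=c\ge 1$.
\item For the upper bound, set $h(w):=f'(1/w)$, which is holomorphic and zero-free in $\mb D$ with $h(0)=c$. Subharmonicity of $|h|$ gives $L(r)\ge 2\pi r c$. The F.~and M.~Riesz theorem gives $h\in H^1$ when $\p\Om$ is rectifiable, so $L(r)\to P(\Om)$, and therefore $2\pi c\le P(\Om)$.
\end{itemize}
You can delete the paragraph on the convex case. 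It is not actually a dead end: with convex level curves, continuity of perimeter under Hausdorff convergence gives $L(r)\to P(\Om)$. But that route needs convexity of the Green level lines, which is harder than what you use.

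The equality case has one unjustified step. Set $m(r):=\frac{1}{2\pi}\int_0^{2\pi}|f'(re^{i\theta})|\,d\theta$. From $m(r)\ge c$ and $2\pi r\,m(r)=L(r)\to P(\Om)=2\pi c$ you only get $m(r)\to c$ as $r\downarrow 1$. That alone does not give equality in Jensen's inequality at any fixed $r$, so ``forces equality in AM--GM for every $r$'' does not yet follow.

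The missing ingredient is monotonicity. Circle means of the subharmonic function $|h|$ are nondecreasing in the radius, and $m(r)$ is the mean of $|h|$ over $|w|=1/r$. So $m$ is nonincreasing in $r$, and $c\le m(r)\le \lim_{s\downarrow 1} m(s)=c$ for every $r>1$.

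Equality at a single radius is then enough. $\log|h|$ is constant on one circle, hence constant on $\mb D$ by the maximum principle and real analyticity. By the open mapping theorem $h$ is constant, so $f(z)=cz+b_0$, and $|\Om|=\pi$ forces $c=1$.
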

    The second inequality is a sharpened version of Proposition \ref{prop:Polya} and was recently shown in \cite[Lemma 4.4]{niebel2025globalrigiditytwodimensionalbubbles}.
    \begin{proposition}
    \label{prop:mainInequality}
    Let $\Om \subset \R^2$ be a Jordan domain with area $|\Om|=\pi$. Then it holds
    \begin{equation}
        \label{ineq:SharpPol}
        4\cp(\overline \Om)\leq \frac{P(\Om)}{2\pi}+\sqrt{5\left(\frac{P(\Om)}{2\pi}\right)^2+4}.
    \end{equation}
    Equality in \eqref{ineq:SharpPol} holds if and only if $\Om=\mb D$ up to rigid motions.
    \end{proposition}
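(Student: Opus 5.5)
The inequality is stated as already proven in \cite[Lemma 4.4]{niebel2025globalrigiditytwodimensionalbubbles}, so citing it would suffice. Still, here is how I would prove it from scratch. The idea is to combine the Riemann map with the area and length formulas in terms of Taylor coefficients.

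Let $\Omega$ be a Jordan domain with $|\Omega|=\pi$. I would work with the exterior conformal map
\[
g(z)=cz+b_0+\sum_{n\ge1}b_nz^{-n},
\]
which sends $\{|z|>1\}$ onto $\widehat{\mathbb C}\setminus\overline\Omega$, normalized so that $c=\cp(\overline\Omega)>0$. By Carathéodory, $g$ extends to a homeomorphism of the boundary circles. I would use three standard facts.
\begin{enumerate}
\item The area theorem gives
\[
\pi=|\Omega|=\pi\Big(c^2-\sum_{n\ge1} n|b_n|^2\Big).
\]
Writing $S:=\sum_{n\ge1} n|b_n|^2$, this says $c^2-S=1$.
\item The perimeter is $P(\Omega)=\int_0^{2\pi}|g'(e^{i\theta})|\,d\theta$. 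This holds in the rectifiable case; if $P(\Omega)=\infty$ there is nothing to prove.
\item On the circle,
\[
g'(e^{i\theta})=c-\sum_{n\ge1} n b_n e^{-i(n+1)\theta}.
\]
\end{enumerate}

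Set $p:=P(\Omega)/(2\pi)$. The target inequality $4c\le p+\sqrt{5p^2+4}$ is equivalent, after solving the quadratic in $p$, to the lower bound
\[
p\ \ge\ \tfrac14\big(c+\sqrt{5c^2-4}\big)\quad\text{(roughly; to be checked)}.
\]
So the task is a lower bound for $\int|g'|$ in terms of $c$, given $c^2-S=1$.

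The crude bound $\frac1{2\pi}\int|g'|\ge\big|\frac1{2\pi}\int g'\big|=c$ gives only $p\ge c$, which is Proposition \ref{prop:Polya}. To gain more, I would exploit that $|g'|$ is far from constant when $S$ is large. Write $F:=g'$ on the circle, so that $\frac1{2\pi}\int|F|^2=c^2+\sum n^2|b_n|^2\ge c^2+S$. The natural tool is to estimate $\int|F|$ from below using the second moment together with a fourth-moment or $L^\infty$ bound. A cleaner alternative is to test $|F|$ against a trigonometric polynomial $h$ with $\|h\|_\infty\le 1$:
\[
\frac1{2\pi}\int|F|\ \ge\ \operatorname{Re}\frac1{2\pi}\int F\,\bar h .
\]
Choosing $h=\lambda+\mu\,\overline{\omega}$, with $\omega$ built from the first coefficient $b_1$ and $|\lambda|+|\mu|\le1$, and optimizing over $\lambda,\mu$ should produce a bound of the form $p\ge \lambda c+\mu|b_1|$. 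Since a single coefficient cannot absorb all of $S$, I would also need the sharper fact that $\int|g'|$ is at least the length of the boundary of the image of a suitably truncated map. This lets me reduce to the case where only $b_1$ is nonzero, i.e.\ $\Omega$ is an ellipse; I expect it to follow from convexity of $|\cdot|$ applied after averaging over rotations $z\mapsto e^{i\phi}z$. For ellipses the inequality reduces to an explicit one-variable inequality between the complete elliptic integral (the perimeter) and its semi-axes $c\pm|b_1|$. That inequality should follow from a known lower bound for the ellipse perimeter, such as Ramanujan type bounds.

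The main obstacle is the reduction from general $S=\sum n|b_n|^2$ to a one-parameter family. The higher coefficients appear in $F$ with weight $n$, while $S$ carries weight $n$ and $\int|F|^2$ carries weight $n^2$. I would first try to prove directly that
\[
\frac1{2\pi}\int|F|\ \ge\ \tfrac14\Big(c+\sqrt{5c^2-4}\Big)\quad\text{whenever } c^2-\sum n|b_n|^2=1,
\]
via a Hölder interpolation between $\int|F|^2$ and $\int |F|^{2/3}$ or similar quantities. The equality case $b_n=0$ for all $n$, i.e.\ $\Omega=\mathbb D$, would follow from strictness in the test-function step.
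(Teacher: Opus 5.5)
The paper gives no argument for this proposition. It is quoted from \cite[Lemma 4.4]{niebel2025globalrigiditytwodimensionalbubbles}, so your first sentence is exactly the paper's treatment. The from-scratch sketch you add, however, has genuine gaps.

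First, your reformulation is wrong. The right-hand side of \eqref{ineq:SharpPol} is increasing in $p$, and the inequality is equivalent to $p^2+2cp+1-4c^2\ge 0$, i.e.\ $p\ge\sqrt{5c^2-1}-c$. Using $c^2-S=1$, this is the same as $(p+c)^2\ge 4c^2+S$. Your quantity $\frac14(c+\sqrt{5c^2-4})$ is strictly smaller than $c$, so the bound in your last display already follows from Proposition~\ref{prop:Polya} and proves nothing new.

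Second, the test-function step cannot beat Proposition~\ref{prop:Polya}. The condition $\|\lambda+\mu\overline\omega\|_\infty\le 1$ forces $|\lambda|+|\mu|\le 1$, so you get at best $\max\{c,|b_1|\}=c$, because $|b_1|<c$ by the area theorem. Crude bounds of this type are hopeless, since the correct inequality is sharp to second order along ellipses. For the ellipse $cz+b_1z^{-1}$ of area $\pi$, with $q=|b_1|/c$, one has $p=c(1+\frac{q^2}{4}+\frac{q^4}{64}+\dots)$ and $(p+c)^2-(4c^2+S)=\frac{c^2q^4}{8}+O(q^6)$. So any proof must capture the term $\frac{|b_1|^2}{4c}$ in $p$ exactly.

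Third, the reduction to ellipses is unsupported and in fact false as stated. Averaging $g'(e^{i\theta})$ over a finite group of rotations, possibly twisted by a character, keeps exactly the Fourier modes in one residue class modulo $m$. Any class containing both $0$ and $-2$ also contains $-4$, so you can never isolate $\{0,-2\}$. More generally, $L^1$ norms are not monotone under Fourier truncation. Concretely, $g_\eps(z)=cz+\beta z^{-1}+\eps z^{-3}$ with $0<\beta<c$ is univalent on $\{|z|>1\}$ for $3|\eps|<c-\beta$, and a first-variation computation gives
\[
\frac{d}{d\eps}\Big|_{\eps=0}\int_0^{2\pi}\bigl|g_\eps'(e^{i\theta})\bigr|\,d\theta=\int_0^{2\pi}\frac{3\beta\cos\phi-3c\cos 2\phi}{|c-\beta e^{i\phi}|}\,d\phi=\frac{3\pi}{4}q^2+O(q^4),\qquad q=\frac{\beta}{c}.
\]
Hence, for $q>0$ small and $\eps<0$ small, the Jordan domain bounded by $g_\eps(\partial\mb D)$ has strictly smaller perimeter than the ellipse with the same $c$ and $b_1$.

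Even a valid comparison would not close the argument. The truncated ellipse has area $\pi(c^2-|b_1|^2)=\pi(1+\sum_{n\ge2}n|b_n|^2)>\pi$ unless $\Om$ is itself an ellipse. In homogeneous form the inequality reads $(p+c)^2+|\Om|/\pi\ge 5c^2$, which weakens as the area grows.

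The H\"older-interpolation idea, which moreover aims at the wrong target, and the equality case are not carried out. So beyond the citation, the proposal lacks the key estimate: a lower bound for $\frac1{2\pi}\int_0^{2\pi}|g'|$ that is sharp to second order in the coefficients under the constraint $c^2-\sum_n n|b_n|^2=1$.
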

    Finally, we recall the logarithmic capacity of a line segment: For $a,b \in \R$ one has
    \begin{equation}
        \label{eq:CapofLine}
        \cp([a,b]\times \{0\})=\frac{|b-a|}{4},
    \end{equation}
    see for example \cite[p. 335]{pommerenke1975univalent}.
    
    \section{The functional $H$}
    We introduce the functional
    \[
    H(\Om):=\frac{T(\Om)^{1/2}\cp(\overline \Om)}{|\Om|^{3/2}}, \quad \text{for }\Om \subset \R^2,\text{ bounded, open and nonempty.}
    \]
    Note that $H$ is invariant under replacing $\Om$ by $t\Om$.\\
    As mentioned in the introduction, our goal is to study planar shapes which maximize the functional $H=H_{\alpha}$, with $\alpha=3/2$. The first result is the observation that the supremum is finite among convex planar sets.
    \begin{proposition}
        \label{prop:FiniteSupH}
        It holds that
        \[
        \sup\{H(\Om): \Om \text{ non-empty, open, bounded, convex}\}\leq  \frac{1+2\sqrt 2}{2\sqrt 3} H(\mb D).
        \]
    \end{proposition}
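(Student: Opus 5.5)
We combine Makai's inequality (Proposition \ref{prop:Makai}) with the sharpened Pólya-type inequality (Proposition \ref{prop:mainInequality}). By the scale invariance of $H$, we may assume $|\Om|=\pi$. A bounded open convex set in the plane is a Jordan domain, so both propositions apply. First we compute the reference value. For the disc one has $u_{\mb D}=(1-|x|^2)/4$, hence $T(\mb D)=\pi/8$, and $\cp(\overline{\mb D})=1$. Therefore
\[
H(\mb D)=\frac{(\pi/8)^{1/2}}{\pi^{3/2}}=\frac{1}{2\sqrt2\,\pi}.
\]

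Next, set $p:=P(\Om)/(2\pi)\ge 1$, where the lower bound is the isoperimetric inequality. Makai's inequality gives
\[
T(\Om)^{1/2}\le \sqrt{\tfrac23}\,\frac{\pi^{3/2}}{2\pi p}.
\]
Proposition \ref{prop:mainInequality} gives
\[
\cp(\overline\Om)\le \tfrac14\bigl(p+\sqrt{5p^2+4}\bigr).
\]
Multiplying and dividing by $|\Om|^{3/2}=\pi^{3/2}$, we obtain
\[
H(\Om)\le \frac{1}{\pi}\sqrt{\tfrac23}\cdot\frac{1}{8}\cdot\frac{p+\sqrt{5p^2+4}}{p}.
\]

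It remains to maximize $f(p):=\bigl(p+\sqrt{5p^2+4}\bigr)/p=1+\sqrt{5+4/p^2}$ over $p\ge1$. This function is decreasing in $p$, so its maximum is at $p=1$, where $f(1)=1+3=4$. Hence
\[
H(\Om)\le \frac{1}{2\pi}\sqrt{\tfrac23}=\frac{1}{\sqrt6\,\pi}.
\]
Dividing by $H(\mb D)$ gives the ratio $2\sqrt2/\sqrt6=2/\sqrt3$.

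This is already below the stated constant $\frac{1+2\sqrt2}{2\sqrt3}\approx 1.105$, since $2/\sqrt3\approx1.155$ is larger. So this naive combination is too weak, and the main obstacle lies here. The correct route is probably to combine Makai with Saint-Venant (Proposition \ref{prop:SaintVenant}) for small $p$, since $T(\Om)\le\pi/8$ gives
\[
H(\Om)\le H(\mb D)\,\tfrac14\bigl(p+\sqrt{5p^2+4}\bigr),
\]
and Makai for large $p$, which gives
\[
H(\Om)\le H(\mb D)\,\tfrac{1}{\sqrt3}\,\frac{p+\sqrt{5p^2+4}}{2p}.
\]
The proof then takes the minimum of the two bounds and maximizes over $p\ge1$. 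The first bound increases in $p$ and the second decreases, so the maximum of the minimum occurs at their crossing point, where
\[
\tfrac14=\tfrac{1}{2\sqrt3\,p},\qquad\text{i.e. } p=\tfrac{2}{\sqrt3}.
\]
There the value is
\[
\tfrac14\Bigl(\tfrac{2}{\sqrt3}+\sqrt{\tfrac{20}{3}+4}\Bigr)=\tfrac14\cdot\tfrac{2+\sqrt{32}}{\sqrt3}=\frac{1+2\sqrt2}{2\sqrt3},
\]
which is exactly the claimed constant. So the plan is this Saint-Venant/Makai case split at $p=2/\sqrt3$, followed by the elementary monotonicity checks of the two one-variable bounds.
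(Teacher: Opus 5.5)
Your final argument is correct and is the paper's proof: normalize $|\Om|=\pi$, bound $(T(\Om)/T(\mb D))^{1/2}\le\min\{1,2/(\sqrt3\, p)\}$ via Saint-Venant and Makai, bound $\cp(\overline\Om)\le c(p)=\frac14(p+\sqrt{5p^2+4})$ via Proposition \ref{prop:mainInequality}, and use that $c$ increases on $[1,2/\sqrt3]$ while $c(p)\cdot 2/(\sqrt3\, p)=\frac{1}{2\sqrt3}(1+\sqrt{5+4/p^2})$ decreases beyond, so the maximum is $c(2/\sqrt3)=\frac{1+2\sqrt2}{2\sqrt3}$. The only slip is wording in your discarded first attempt: the Makai-only bound $2/\sqrt3$ lies above the target constant, not below it, and that is why that route is too weak.
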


    \begin{proof}
        By scale invariance we can assume without loss of generality that $|\Omega|=\pi$. Set
        \[
        p=\frac{P(\Omega)}{2\pi}.
        \]
        Then from Proposition \ref{prop:Polya} $p\ge 1$, and
        \[
        \frac{H(\Omega)}{H(\mb D)}
        =
        \left(\frac{T(\Omega)}{T(\mb D)}\right)^{1/2}\cp(\overline{\Om}).
        \]
        
        Combining Proposition \ref{prop:SaintVenant} and Proposition \ref{prop:Makai} we have
        \[
        \left(\frac{T(\Omega)}{T(\mb D)}\right)^{1/2}
        \le 
        \min\left\{1,\frac{2}{p\sqrt{3}}\right\}
        \]
        From Proposition \ref{prop:mainInequality} we have
        \[
        \cp(\overline \Om)\le c(p):=\frac{p+\sqrt{5p^{2}+4}}{4}.
        \]
        Therefore
        \[
        \frac{H(\Omega)}{H(\mb D)}
        \le 
        c(p)\min\left\{1,\frac{2}{p\sqrt{3}}\right\}
        =:F(p).
        \]
        
        Thus, we have to maximize $F$ for $p\ge 1$. If $1\le p\le 2/\sqrt{3}$, then
        \[
        F(p)=c(p),
        \]
        and $c(p)$ is increasing, since
        \[
        c'(p)=\frac14\left(1+\frac{5p}{\sqrt{5p^2+4}}\right)>0.
        \]
        Hence on $[1,2/\sqrt{3}]$, $F$ is maximized at $p=2/\sqrt{3}$.
        
        If $p\ge 2/\sqrt{3}$, then
        \[
        F(p)=
        \frac{1}{2\sqrt{3}}\left(1+\sqrt{5+\frac{4}{p^2}}\right),
        \]
        which is decreasing in $p$. Hence on $[2/\sqrt{3},\infty)$, $F$ is also maximized at $p=2/\sqrt{3}$. Thus,
        \[
        F(p)\le F(2/\sqrt{3})=c(2/\sqrt{3})=\frac{1+2\sqrt{2}}{2\sqrt{3}},
        \]
        which completes the proof.
    \end{proof}

    \begin{remark}
        In \cite[Theorem 4]{van2023some} the same statement is shown with constant $8$ instead of $\frac{1+2\sqrt 2}{2\sqrt 3}$. Since our constant is
        \[
        \frac{1+2\sqrt 2}{2\sqrt 3}\approx1.11
        \]
        much closer to the value of $1$, which we conjecture here to be the optimal bound, we decided to include the proof.
    \end{remark}
    We denote in the following
    \[
        \mc C_\pi:=\{\Om \subset \R^2 :\Om \text{ non-empty, open, bounded, convex, }|\Om|=\pi\}.
    \]
    The main result of this section is the following theorem.
    \begin{theorem}
        \label{thm:ExistenceOfMaximizer}
        There exists a non-empty, open, bounded, convex maximizer of the variational problem
        \[
        \sup\{H(\Om): \Om \text{ non-empty, open, bounded, convex}\}.
        \]
    \end{theorem}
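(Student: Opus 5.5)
We argue by the direct method in $\mc C_\pi$, using the scale invariance of $H$ to normalize $|\Om|=\pi$. Let $(\Om_k)\subset\mc C_\pi$ be a maximizing sequence, so $H(\Om_k)\to S:=\sup H$. By Proposition \ref{prop:FiniteSupH}, $S<\infty$, and trivially $S\ge H(\mb D)>0$. After translating, we may assume each $\Om_k$ contains the origin.

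The key step is to exclude loss of compactness, i.e.\ degeneration of $\Om_k$ to a segment. Set $p_k=P(\Om_k)/(2\pi)$. Combining Makai (Proposition \ref{prop:Makai}) with Proposition \ref{prop:mainInequality}, exactly as in the proof of Proposition \ref{prop:FiniteSupH}, gives
\[
\frac{H(\Om_k)}{H(\mb D)}\le \frac{1}{2\sqrt3}\Bigl(1+\sqrt{5+4/p_k^2}\Bigr).
\]
As $p_k\to\infty$ the right-hand side tends to $(1+\sqrt5)/(2\sqrt3)\approx 0.934<1$. Since $S\ge H(\mb D)$, this forces $\sup_k p_k<\infty$ for any maximizing sequence. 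For convex sets, $\diam\Om_k\le P(\Om_k)/2$, so the $\Om_k$ lie in a fixed ball.

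By Blaschke's selection theorem, a subsequence of $\overline{\Om_k}$ converges in Hausdorff distance to a compact convex set $K$. Area is continuous under Hausdorff convergence of convex bodies, so $|K|=\pi$. Hence $K$ has nonempty interior $\Om$ with $\overline\Om=K$, and $\Om\in\mc C_\pi$.

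It remains to pass to the limit in each factor of $H$.
\begin{itemize}[label={}]
\item For torsion: convex sets converging in Hausdorff distance with nonempty limit interior converge in the complementary Hausdorff sense as well. Uniform convex domains satisfy a uniform exterior cone or capacity density condition, so $\gamma$-convergence holds by Šverák-type results and $T(\Om_k)\to T(\Om)$. Alternatively, one can sandwich $(1-\eps)\Om\subset\Om_k\subset(1+\eps)\Om$ after translation (valid since $0$ is interior to $\Om$ for a suitable translation), and use monotonicity of $T$ together with the scaling \eqref{scalingTorsion}.
\item For capacity: the same sandwich with monotonicity and the scaling \eqref{scalingLogCap} gives $\cp(\overline{\Om_k})\to\cp(\overline\Om)$.
\end{itemize}
Thus $H(\Om)=\lim_k H(\Om_k)=S$, and $\Om$ is a maximizer.

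The main obstacle is the non-degeneration step, which relies on $(1+\sqrt5)/(2\sqrt3)<1$; this inequality holds. The sandwich argument requires centering at an interior point of the limit, which is routine.
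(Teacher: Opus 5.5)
Your proof is correct, but it takes a different route from the paper in both steps.

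\textbf{Non-degeneracy.} You reuse the Makai plus sharpened-P\'olya bound from Proposition \ref{prop:FiniteSupH}. You observe that it tends to $(1+\sqrt5)/(2\sqrt3)<1$ as $p\to\infty$, which bounds the perimeter, and hence the diameter, of any maximizing sequence directly. The paper instead combines Makai in diameter form (Lemma \ref{lem:consequenceOfMakai}) with the asymptotics $\cp(\overline \Om_n)/\diam(\Om_n)\to 1/4$ for collapsing convex sets (Lemma \ref{lem:RatBound}). This yields the limiting ratio $\pi/(2\sqrt3)\approx 0.907<1$.

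Your route is shorter and never has to analyze a degenerate limit. However, it depends essentially on the recent Proposition \ref{prop:mainInequality}. With only the classical bound $\cp(\overline\Om)\le p$ you would get $2/\sqrt3>1$, and the argument would fail. The paper's route needs only the capacity of a segment and Hausdorff continuity of capacity on continua, so it is more robust.

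\textbf{Limit passage.} Your sandwich $(1-\eps)\Om\subset\Om_k\subset(1+\eps)\Om$ follows from uniform convergence of support functions together with $B_r(0)\subset\Om$. Combined with monotonicity and homogeneity, it handles $T$ and $\cp$ at once. This is more elementary than the paper's appeal to an external continuity theorem for torsion and to Lemma \ref{lem:ContinuityOfLogCap}. The paper needs that lemma anyway, because Lemma \ref{lem:RatBound} requires continuity of capacity at a segment, where no such sandwich is available.
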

    We first give a simple consequence of Proposition \ref{prop:Makai}.
    \begin{lemma}
        \label{lem:consequenceOfMakai}
        If $\Om \subset \R^2$ is open, bounded and convex, then
        \[
        T(\Om)\leq \frac16 \frac{|\Om|^3}{\diam(\Om)^2}.
        \]
    \end{lemma}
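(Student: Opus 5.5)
The plan is to combine Makai's inequality (Proposition \ref{prop:Makai}) with an elementary lower bound for the perimeter of a convex set in terms of its diameter. For a bounded convex planar set one has
\[
P(\Om)\geq 2\diam(\Om).
\]
Substituting this into Proposition \ref{prop:Makai} gives
\[
T(\Om)\leq \frac23\frac{|\Om|^3}{P(\Om)^2}\leq \frac23\frac{|\Om|^3}{4\diam(\Om)^2}=\frac16\frac{|\Om|^3}{\diam(\Om)^2},
\]
which is exactly the claim. The case $\Om=\emptyset$ is trivial, and for nonempty open $\Om$ the diameter is positive, so the quotient is well defined.

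The only real step is to justify $P(\Om)\geq 2\diam(\Om)$. Since $\overline\Om$ is compact, the diameter is attained at points $x,y\in\partial\Om$ with $|x-y|=\diam(\Om)$. The boundary of a bounded open convex planar set is a closed Jordan curve of length $P(\Om)$. The points $x$ and $y$ split this curve into two arcs. Each arc joins $x$ to $y$, so each has length at least $|x-y|$. Adding the two arcs gives $P(\Om)\geq 2|x-y|=2\diam(\Om)$.

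If one prefers to avoid identifying the perimeter with the curve length, the same bound follows from Cauchy's formula. The perimeter equals the integral over directions of the width function, $P(\Om)=\int_0^\pi w(\theta)\,d\theta$. The width $w(\theta)$ is at least the length of the projection of the segment $[x,y]$ onto the direction $\theta$. Integrating $\diam(\Om)\,|\cos(\theta-\theta_0)|$ over $[0,\pi]$ gives exactly $2\diam(\Om)$. None of these steps is a serious obstacle; the lemma is essentially a direct corollary of Proposition \ref{prop:Makai}.
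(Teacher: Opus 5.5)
Your proof is correct and follows the paper's argument: Makai's inequality combined with $P(\Om)\geq 2\diam(\Om)$, which you obtain by splitting $\p\Om$ at two diameter-attaining points into two arcs, each of length at least $|x-y|$. Your alternative derivation of the perimeter bound via Cauchy's formula is also valid, though it is not needed.
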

    \begin{proof}
        The inequality follows immediately from Proposition \ref{prop:Makai} after showing that 
        \begin{equation}
            \label{ineq:DiamPer}
            2\diam(\Om) \leq P(\Om).
        \end{equation}
        Pick $x,y \in \p \Om$ such that
        \begin{equation}
            \label{eq:PointsAttainingd}
            \diam(\Om) =|x-y|.
        \end{equation}
        Since $\Om$ is bounded, open, and convex in $\R^2$, its boundary is a rectifiable Jordan curve. Thus the points $x,y\in\p\Om$ split $\p\Om$
        into two rectifiable arcs $\gamma_1$ and $\gamma_2$, both connecting $x$
        to $y$, such that
        \[
        \partial\Omega=\gamma_1\cup\gamma_2
        \]
        and
        \[
        P(\Omega)=\ell(\gamma_1)+\ell(\gamma_2).
        \]
        For each $i=1,2$, since $\gamma_i$ is a curve connecting $x$ and $y$,
        its length is at least the Euclidean distance between its endpoints. Hence
        \[
        |x-y|\leq \ell(\gamma_i), \qquad i=1,2.
        \]
        Therefore, by \eqref{eq:PointsAttainingd}
        \[
        2\diam(\Omega)
        =2|x-y|
        \leq \ell(\gamma_1)+\ell(\gamma_2)
        =P(\Omega).
        \]
        This proves \eqref{ineq:DiamPer} and hence the lemma.
    \end{proof}
    Next we note that the logarithmic capacity is continuous with respect to Hausdorff convergence among planar connected sets. This is a well-known fact, but we did not find an exact reference, so we provide the simple proof here.
    \begin{lemma}
        \label{lem:ContinuityOfLogCap}
        Suppose $K_n\subset \R^2$ is a sequence of compact, connected sets with $\diam(K_n)>0$. Suppose that
        \[
        d_H(K_n,K)\to 0,
        \]
        where $K \subset \R^2$ is some compact set with $\mathrm{diam}(K)>0$ and $d_H$ is the Hausdorff distance. Then
        \[
        \cp(K_n)\to\cp(K).
        \]
    \end{lemma}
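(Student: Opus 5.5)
The plan is to prove the two semicontinuity inequalities separately, using only elementary properties of capacity: monotonicity under inclusion, the scaling \eqref{scalingLogCap}, and upper semicontinuity along decreasing sequences of compacts. Let $\eps_n:=d_H(K_n,K)\to0$ and write $K^\delta:=\{x:\operatorname{dist}(x,K)\le\delta\}$ for the closed $\delta$-neighbourhood.

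\emph{Upper bound.} We have $K_n\subset K^{\eps_n}$, so monotonicity gives $\cp(K_n)\le\cp(K^{\eps_n})$. The sets $K^{\delta}$ decrease to $K$ as $\delta\downarrow0$. For such a decreasing sequence of compacts one has $\cp(K^\delta)\downarrow\cp(K)$; this is the standard continuity of capacity from above. To prove it, take equilibrium measures $\mu_\delta$ on $K^\delta$ and extract a weak-$*$ limit $\mu\in\mc P(K)$. Since $\log|x-y|^{-1}$ is lower semicontinuous and bounded below on bounded sets, the energy is lower semicontinuous, so $I(K)\le\liminf I(K^\delta)$. Combined with $I(K^\delta)\le I(K)$ from monotonicity, this gives $\limsup\cp(K_n)\le\cp(K)$. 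This half does not need connectedness.

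\emph{Lower bound.} This is where connectedness of the $K_n$ is used, and I expect it to be the main obstacle. Let $L$ be a limit point of $\cp(K_n)$. Then $K_n$ contains continua of diameter at least $d/2$ for large $n$, where $d=\diam K>0$. Connectedness is what forbids a lower bound failing through thin, polar-like pieces; without it, finite sets approximating a disc would have capacity $0$. The idea is to compare $K$ with slightly dilated copies of the $K_n$.

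\emph{First route.} Reduce to showing $\cp(K)\le\liminf\cp(K_n^{\eta})$ for fixed $\eta>0$, together with $\cp(K_n^\eta)\le\cp(K_n)+o_\eta(1)$ uniformly in $n$. The second estimate is where connectedness enters. For a continuum $E$, the Green function $g_E$ of the complement with pole at infinity is Hölder continuous at $E$ with uniform constants depending only on $\diam E$. This follows from the classical estimate that harmonic measure of a continuum satisfies $\omega(z,\cdot)$ bounds like $g_E(z)\le C(\operatorname{dist}(z,E)/\diam E)^{1/2}$, via Beurling's projection theorem. Then $K_n^\eta\subset\{g_{K_n}\le C\eta^{1/2}\}$, and the sublevel set $\{g_E\le t\}$ has capacity $e^{t}\cp(E)$. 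Hence $\cp(K_n^\eta)\le e^{C\eta^{1/2}}\cp(K_n)$, uniformly in $n$ because $\diam K_n\ge d/2$ and $\cp(K_n)$ is bounded. Since $K\subset K_n^{\eps_n}\subset K_n^\eta$ once $\eps_n\le\eta$, we get $\cp(K)\le e^{C\eta^{1/2}}\liminf\cp(K_n)$. Letting $\eta\to0$ finishes the proof.

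\emph{Fallback.} If the Beurling-type estimate feels too heavy, a simpler alternative is available when $K$ is itself connected, which holds since Hausdorff limits of continua are continua. In that case the argument can be run symmetrically using the Green function of $K$ instead, with the same Hölder bound; in fact it gives both inequalities at once via $K_n\subset K^{\eps_n}$ and $K\subset K_n^{\eps_n}$. The key cited fact in either version is the uniform Hölder continuity of Green functions of nondegenerate continua.
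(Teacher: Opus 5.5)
Your proof is correct, but it takes a different route from the paper. The paper does no potential theory of its own here. It checks, by applying the intermediate value theorem to $z\mapsto|z-a|$ on $E$, that every continuum $E$ of positive diameter is $1/2$-uniformly perfect, and then quotes the continuity theorem for uniformly perfect sets from \cite{kalmykov2022continuity}.

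You instead prove both inequalities by hand. The upper bound comes from monotonicity and continuity of capacity along $K^\delta\downarrow K$; your weak-$*$ compactness and lower semicontinuity argument is fine and needs no connectedness. The lower bound comes from the uniform estimate $g_E(z)\le C_0\bigl(\mathrm{dist}(z,E)/\diam E\bigr)^{1/2}$ for continua, valid for $\mathrm{dist}(z,E)$ small compared with $\diam E$ and obtained from Beurling's estimate, together with $\cp(\{g_E\le t\})=e^{t}\cp(E)$. This is essentially the mechanism behind the quoted theorem (uniform perfectness gives uniform Hölder continuity of Green functions), made explicit for continua, where Beurling supplies the exponent $1/2$.

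The two routes buy different things. The paper's is shorter and covers all uniformly perfect sets. Yours is self-contained modulo Beurling and is quantitative: since $K$ is itself a continuum, applying the same bound to $g_K$ gives $|\log\cp(K_n)-\log\cp(K)|\le C\,d_H(K_n,K)^{1/2}$.

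Two remarks on details.
\begin{enumerate}
\item The constant $C_0$ is universal because, on a circle of radius $R\le\diam E/2$ centred at a point of $E$, one has $g_E\le\log\bigl((R+\diam E)/\cp(E)\bigr)\le\log 6$. This uses the lower bound $\cp(E)\ge\diam E/4$ for continua. An upper bound on $\cp(K_n)$ is only needed to pass to your additive formulation.
\item Your ``fallback'' is not really an alternative. The Green function of $K$ yields only $\cp(K_n)\le e^{C\sqrt{\eps_n}}\cp(K)$. Exploiting $K\subset K_n^{\eps_n}$ still requires the estimate for $g_{K_n}$ uniformly in $n$, which is exactly your first route.
\end{enumerate}
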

    \begin{proof}
        This follows immediately from Theorem 3.3 in \cite{kalmykov2022continuity}, after proving that every compact, connected set $E\subset \R^2$ with positive diameter satisfies
        \begin{equation}
            \label{eq:uniformlyperfect}
            E\cap \left\{z:\frac{1}{2} r \leq |z-a|\leq r\right\}\neq \emptyset,\quad \forall0< r\leq \mathrm{diam}(E),a\in E.
        \end{equation}
        In the language of \cite{kalmykov2022continuity} this means the set $E$ is $1/2$-uniformly perfect. Indeed, let $0<r\leq \diam E$, $a \in E$ and pick $x,y \in E$ such that
        \[
        |x-y|=\diam (E).
        \]
        Without loss of generality, we can assume that
        \begin{equation}
            \label{eq:lowerboundxa}
            |x-a|\geq \frac{\diam (E)}{2}\geq \frac r2.
        \end{equation}
        The map $g:E \to \R$, $g(z)=|z-a|$ is continuous and $g(E)\subset \R$ is an interval containing $0$ and by \eqref{eq:lowerboundxa} it contains $r/2$. In particular
        \[
        E\cap \left\{z:\frac{1}{2} r \leq |z-a|\leq r\right\}
        \]
        cannot be empty, which proves \eqref{eq:uniformlyperfect}.
    \end{proof}
    \begin{lemma}
        \label{lem:RatBound}
        Consider a sequence of sets $(\Om_n)_n \subset \mc C_\pi$. If $\diam(\Om_n)\to \infty$, then up to taking a subsequence
        \[
        \frac{\cp(\overline \Om_n)}{\diam(\Om_n)}\to \frac14.
        \]
    \end{lemma}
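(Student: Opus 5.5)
Set $d_n:=\diam(\Om_n)$ and rescale: let $K_n:=d_n^{-1}\overline{\Om_n}$. Then $K_n$ is a compact convex set of diameter $1$ and area $\pi/d_n^2\to 0$. By the scaling \eqref{scalingLogCap}, $\cp(\overline\Om_n)/d_n=\cp(K_n)$. So it suffices to show that, along a subsequence, $K_n$ converges in Hausdorff distance to a segment of length $1$. Lemma \ref{lem:ContinuityOfLogCap} and \eqref{eq:CapofLine} then give the limit $1/4$.

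\textbf{Step 1 (normalization and compactness).} After a rigid motion (which does not change capacity), place two diametral points of $K_n$ at $(0,0)$ and $(1,0)$. Then $K_n\subset \overline B(0,1)$ for all $n$. By the Blaschke selection theorem there is a subsequence with $K_n\to K$ in Hausdorff distance for some compact convex $K$. Since the points $(0,0)$ and $(1,0)$ lie in every $K_n$, both lie in $K$, so $\diam K\ge 1$. Diameter is continuous under Hausdorff convergence, so $\diam K=1$.

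\textbf{Step 2 (the limit is the segment).} This is the key step. I would use the width. Let $w_n$ be the extent of $K_n$ in the direction $e_2$, i.e. the length of the projection of $K_n$ onto the $y$-axis. Pick points $p_n^\pm\in K_n$ realizing the maximal and minimal $y$-coordinates. By convexity, $K_n$ contains the two triangles with base $[(0,0),(1,0)]$ and apexes $p_n^\pm$. These triangles lie on opposite sides of the $x$-axis, or one is degenerate, so
\[
|K_n|\ge \tfrac12 w_n.
\]
Since $|K_n|=\pi/d_n^2\to0$, we get $w_n\to0$. Hence $K_n\subset[-1,2]\times[-w_n,w_n]$, and in fact $K_n\subset \overline B(0,1)\cap\overline B((1,0),1)$ because of the diameter bound. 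Therefore $K\subset \R\times\{0\}$. A compact convex subset of a line containing $(0,0)$ and $(1,0)$ with diameter $1$ is exactly $[0,1]\times\{0\}$.

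\textbf{Step 3 (conclusion).} Each $K_n$ is connected, being convex, and has diameter $1>0$. The limit $K$ has diameter $1>0$. So Lemma \ref{lem:ContinuityOfLogCap} applies and gives $\cp(K_n)\to\cp([0,1]\times\{0\})=1/4$ by \eqref{eq:CapofLine}.

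The step I expect to be the main obstacle is Step 2, namely making sure that vanishing area forces Hausdorff convergence to the segment. The triangle argument above handles it directly. The only care needed is that the estimate $|K_n|\ge w_n/2$ holds even when both extremal points lie on the same side of the $x$-axis, and it does, since then the single triangle of height $w_n$ already gives this bound.
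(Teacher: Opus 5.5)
Your proof is correct and follows essentially the same route as the paper. Both rescale to unit diameter, extract a Hausdorff limit via Blaschke selection, identify it as a unit segment, and conclude with Lemma \ref{lem:ContinuityOfLogCap} and \eqref{eq:CapofLine}; the only difference is that you justify the degeneration of the limit explicitly through the width bound $|K_n|\ge w_n/2$, whereas the paper simply notes that the limit has zero area and hence must be a segment.
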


    \begin{proof}
        Let
        \[
        K_n:=p_n+t_n\overline \Om_n,
        \]
        where $t_n>0$ and $p_n \in \R^2$ are chosen such that $\diam(K_n)=1$ and $0 \in K_n$. Note that
        \[
        |K_n|\to 0,
        \]
        by the volume constraint $|\Om_n|=\pi$. In particular, after taking subsequences, Blaschke's selection theorem yields a convex limit $K$, which has $\mathrm{diam}(K)=1$ and $|K| =0$. This forces $K$ to be a line segment of length $1$, without loss of generality say
        \[
        K=[0,1] \times\{0\}.
        \]
        All sets we work with have positive diameter and are convex, hence connected. Therefore, from Lemma \ref{lem:ContinuityOfLogCap} and by continuity of diameter under Hausdorff convergence of compact sets, we have
        \[
        \frac{\cp(\overline \Om_n)}{\diam(\Om_n)}=\frac{\cp(K_n)}{\diam(K_n)}\to\frac{\cp(K)}{\diam(K)},
        \]
        where we used that the ratio $\frac{\cp(\cdot)}{\diam(\cdot)}$ is invariant under translating and rescaling by \eqref{scalingLogCap} in the first equality. Since
        \[
        \cp([0,1]\times \{0\})=\frac{1}{4},
        \]
        by \eqref{eq:CapofLine}, the proof follows.
    \end{proof}

    \begin{proof}[Proof of Theorem \ref{thm:ExistenceOfMaximizer}]
        Note by scale-invariance of $H$, we can restrict to sets in $\mc C_\pi$.\\ We proceed in two steps, first we rule out the regime of thin sets, second we conclude by the direct method.\\
        \textbf{Step 1: Ruling out degeneracy.} First note that
        \[
        M:=\sup_{\Om\in \mc C_\pi}H(\Om)<\infty,
        \]
        by Proposition \ref{prop:FiniteSupH}. Hence, we can consider a maximizing sequence of sets $(\Om_n)_n \subset \mc C_\pi$ such that  
        \[
        H(\Om_n)\to M.
        \]
        Suppose for the sake of contradiction that $\diam(\Om_n) \to \infty$. Then from Lemma \ref{lem:RatBound} we have 
        \begin{equation}
            \label{ineq:MaxContra}
             H(\Om_n)\leq \frac{\cp(\overline \Om_n)}{\sqrt{6}\diam(\Om_n)}\to\frac{1}{4\sqrt{6}}<\frac{1}{2\sqrt{2}\pi}=H(\mb D),
        \end{equation}
        where we used that $T(\mb D)=\pi/8$, see \cite[Equation (9)]{van2023some} and $\cp(\mb D)=1$ in the last equality. The inequality \eqref{ineq:MaxContra} is a contradiction to the fact that the sequence is maximizing.\\
        \textbf{Step 2: Compactness.} From the above we can assume that up to a subsequence $\sup_{n}
        \diam(\Om_n)<\infty$. This means in particular that (after translating the sets if necessary), there exists some set $\Om \in \mc C_\pi$, such that up to a subsequence
        \begin{equation}
            \label{eq:ConvToMax}
            d_H(\overline \Om_n,\overline \Om)\to 0.
        \end{equation}
        By Lemma \ref{lem:ContinuityOfLogCap}, the logarithmic capacity is continuous with respect to \eqref{eq:ConvToMax}. The same statement for the torsional rigidity is given in \cite[Theorem 3.8]{colesanti2010minkowski}. Hence, we have by the choice of $(\Om_n)_n$ as a maximizing sequence that
        \[
        H(\Om)=\lim_{n\to \infty}H(\Om_n)=M,
        \]
        so $\Om$ is the desired maximizer.
    \end{proof}

    \section{The functional $H_\alpha$}
    In this section we consider a version of the functional $H$. Define
    \[
    H_\alpha(\Om):=\frac{T(\Om)^{1/2}\cp(\Om)}{|\Om|^\alpha P(\Om)^{3-2\alpha}},\quad \alpha \in [0,3/2].
    \]
    We consider the functional on two classes of sets. First, let us consider Jordan domains for which we can prove a sharp result.
    \begin{theorem}
        \label{thm:HalphaJordan}
        The following holds.
        \begin{enumerate}
        \item \label{it:halpha1}
        For $0\leq \alpha \leq 1$ we have 
        \[
        \sup\{H_\alpha(\Om): \Om \text{ non-empty, Jordan domain}\}=H_\alpha(\mb D),
        \]
        and $\mb D$ is the unique maximizer up to translations and dilations.
        \item \label{it:halpha2}
        For $1<\alpha \leq \frac32$ one has
        \[
        \sup\{H_\alpha(\Om): \Om \text{ non-empty, Jordan domain}\}=\infty.
        \]
    \end{enumerate}
    \end{theorem}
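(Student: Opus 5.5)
For \ref{it:halpha1} I plan to compare directly with the disc, using the same normalization as in the proof of Proposition \ref{prop:FiniteSupH}. A first check shows that $H_\alpha$ is invariant under dilations. Under $\Om\mapsto t\Om$ the numerator scales like $t^{2}\cdot t=t^3$, by \eqref{scalingTorsion} and \eqref{scalingLogCap}. The denominator scales like $t^{2\alpha}t^{3-2\alpha}=t^3$. So we may assume $|\Om|=\pi$ and set $p=P(\Om)/(2\pi)$. Since $|\mb D|=\pi$, $P(\mb D)=2\pi$ and $\cp(\overline{\mb D})=1$, we get
\[
\frac{H_\alpha(\Om)}{H_\alpha(\mb D)}=\left(\frac{T(\Om)}{T(\mb D)}\right)^{1/2}\cp(\overline\Om)\,p^{-(3-2\alpha)}.
\]
Next I bound each factor. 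Saint-Venant, Proposition \ref{prop:SaintVenant}, bounds the first factor by $1$. Proposition \ref{prop:Polya} gives $\cp(\overline\Om)\le p$. The isoperimetric inequality gives $p\ge 1$. Together these yield
\[
\frac{H_\alpha(\Om)}{H_\alpha(\mb D)}\le p^{2\alpha-2}\le 1
\]
for $\alpha\le 1$. For uniqueness, equality forces equality in $\cp(\overline\Om)\le p$. By the equality case of Proposition \ref{prop:Polya}, this means $\Om=\mb D$ up to rigid motions, hence up to translations and dilations after undoing the normalization. I do not expect any real difficulty in this part.

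For \ref{it:halpha2} I will build non-convex Jordan domains with bounded area, perimeter of order $L$, and capacity of order $L$. The natural candidate is a disc with a long thin tentacle attached:
\[
\Om_L:=\mb D\cup\bigl((0,L)\times(-\tfrac1{2L},\tfrac1{2L})\bigr), \qquad L\ge 2,
\]
which is a Jordan domain. Then I need four estimates.
\begin{itemize}
\item Area: $\pi\le|\Om_L|\le \pi+1$.
\item Perimeter: $P(\Om_L)\le 2\pi+2L+1\le 3L$ for $L$ large.
\item Torsion: by domain monotonicity of the torsion function (maximum principle, since $u_{\Om}\ge0$ and $\mb D\subset\Om_L$), $T(\Om_L)\ge T(\mb D)=\pi/8$.
\item Capacity: $[0,L]\times\{0\}\subset\overline\Om_L$, so monotonicity of capacity (immediate from the definition of $I$ as an infimum over probability measures) together with \eqref{eq:CapofLine} gives $\cp(\overline\Om_L)\ge L/4$.
\end{itemize}

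Combining these gives
\[
H_\alpha(\Om_L)\ge \frac{(\pi/8)^{1/2}(L/4)}{(\pi+1)^\alpha(3L)^{3-2\alpha}}=c_\alpha L^{2\alpha-2}\to\infty
\]
as $L\to\infty$ whenever $\alpha>1$. This proves \ref{it:halpha2}.

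The only point needing care is the choice of the competitor. Making the domain thin, for instance a thin C-shaped annulus, is the wrong idea: there the torsion decays like $|\Om|^{3}$ and only produces blow-up for $\alpha>3/2$. The tentacle construction avoids this by keeping the area and torsion fixed while the capacity grows linearly in $L$. This beats the perimeter factor $P^{3-2\alpha}$ exactly when $3-2\alpha<1$, that is, when $\alpha>1$.
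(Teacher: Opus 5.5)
Your proposal is correct and follows the paper's proof essentially step for step. For part (i) you normalize $|\Om|=\pi$, apply Saint-Venant, Proposition \ref{prop:Polya} and $p\ge 1$, and get uniqueness from the equality case of Proposition \ref{prop:Polya}; for part (ii) you use the same disc-plus-thin-tentacle competitor with the same area, perimeter, torsion and capacity estimates giving $H_\alpha\gtrsim L^{2\alpha-2}$, and your one-sided strip of width $1/L$ differs only cosmetically from the paper's two-sided strip $(-n,n)\times(-n^{-2},n^{-2})$.
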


    \begin{proof}
        We start with the proof of \ref{it:halpha1}. Let $\Om$ be a non-empty Jordan domain and suppose without loss of generality that $|\Om|=\pi$. Denote $p:=\frac{P(\Om)}{2\pi}$. As $0\leq \alpha \leq 1$, by using first Proposition \ref{prop:Polya} and then Proposition \ref{prop:SaintVenant} we obtain immediately
        \begin{equation}
            \label{eq:HalphaUp}
            \frac{H_\alpha(\Om)}{H_\alpha(\mb D)}=\left(\frac{T(\Om)}{T(\mb D)}\right)^{1/2}\frac{\cp(\overline \Om)}{p^{3-2\alpha}} \leq \left(\frac{T(\Om)}{T(\mb D)}\right)^{1/2}p^{2\alpha-2}\leq p^{2\alpha-2}\leq 1.
        \end{equation}
        Suppose equality holds in \eqref{eq:HalphaUp}. Then in particular we have equality in Proposition \ref{prop:Polya}, which implies that $\Om =\mb D$, up to rigid motions.\\
        For the proof of \ref{it:halpha2} we construct a sequence of planar Jordan domains $\Om_n$ such that
        \[
        H_\alpha(\Om_n)\to \infty,\quad \text{as }n \to \infty.
        \]
        For $n\in \mb N$ sufficiently large, consider the domain
        \[
        \Om_n=\mb D\cup \big((-n,n)\times(-n^{-2},n^{-2})\big).
        \]
        By monotonicity of the torsion we have
        \[
        T(\Om_n)\geq T(\mb D).
        \]
        Similarly, we have by \eqref{eq:CapofLine} that
        \[
        \cp(\overline \Om_n)\geq \cp([-n,n]\times \{0\})=\frac n2.
        \]
        Next, we can bound the area from above as
        \[
        |\Om_n|\leq  \pi +4 nn^{-2}=\pi+\frac{4}{n}\leq C,
        \]
        where $C$ is independent of $n$. Finally, it is clear that for some $C>0$ independent of $n$ one has
        \[
        P(\Om_n)\leq Cn.
        \]
        Hence, the estimates above imply
        \[
        H_\alpha(\Om_n)\geq C n^{2\alpha-2}
        \]
        which goes to $\infty$ since $\alpha>1$.
    \end{proof}
    In the convex class, exactly such degeneracies are ruled out. We first note the following.
    \begin{proposition}
        \label{prop:existenceMaxHalpha}
        Let $\alpha \in [0,3/2]$. There exists a non-empty, open, bounded, convex maximizer of the variational problem
        \[
        \sup\{H_\alpha(\Om): \Om \text{ non-empty, open, bounded, convex}\}.
        \]
    \end{proposition}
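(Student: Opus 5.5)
The plan is to split into the critical case $\alpha=3/2$, which is exactly Theorem \ref{thm:ExistenceOfMaximizer} since $H_{3/2}=H$, and the case $\alpha\in[0,3/2)$. For the latter we use the direct method in $\mc C_\pi$. By scale invariance of $H_\alpha$, which follows from \eqref{scalingTorsion} and \eqref{scalingLogCap} since the degrees are $2+1-2\alpha-(3-2\alpha)=0$, we may restrict to $\mc C_\pi$.

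\textbf{Finiteness of the supremum.} For $\Om\in\mc C_\pi$ we write $p=P(\Om)/(2\pi)\ge 1$ and note
\[
H_\alpha(\Om)=H(\Om)\,\frac{\pi^{3/2-\alpha}}{P(\Om)^{3-2\alpha}}=H(\Om)\,\frac{\pi^{3/2-\alpha}}{(2\pi p)^{3-2\alpha}}.
\]
Since $3-2\alpha\ge 0$ and $p\ge1$, Proposition \ref{prop:FiniteSupH} gives $\sup_{\mc C_\pi}H_\alpha<\infty$.

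\textbf{No degeneration.} Take a maximizing sequence $(\Om_n)\subset\mc C_\pi$ and suppose $\diam(\Om_n)\to\infty$. Then $P(\Om_n)\ge 2\diam(\Om_n)\to\infty$ by \eqref{ineq:DiamPer}. The computation in Step 1 of the proof of Theorem \ref{thm:ExistenceOfMaximizer} (Lemma \ref{lem:consequenceOfMakai} together with Lemma \ref{lem:RatBound}) shows that $H(\Om_n)$ stays bounded. Because $3-2\alpha>0$, the factor $P(\Om_n)^{-(3-2\alpha)}$ tends to $0$, so $H_\alpha(\Om_n)\to0<H_\alpha(\mb D)$. This contradicts maximality, so the diameters are bounded along a subsequence.

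\textbf{Compactness.} After translation, Blaschke selection yields a convex compact limit $\overline\Om$ in Hausdorff distance. Area is continuous under Hausdorff convergence of convex sets, so $|\Om|=\pi$ and $\Om$ is nondegenerate, with nonempty interior. Perimeter is also continuous for convex sets under Hausdorff convergence, for instance via Cauchy's formula as the integral of the support-function width. Capacity converges by Lemma \ref{lem:ContinuityOfLogCap}, and torsion by \cite[Theorem 3.8]{colesanti2010minkowski}. Passing to the limit gives $H_\alpha(\Om)=\sup H_\alpha$.

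The only delicate point is the degeneracy step. It works only because $\alpha<3/2$ makes the perimeter exponent positive; the critical case $\alpha=3/2$ has already been handled through the Makai-based argument.
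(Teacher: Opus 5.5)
Your proof is correct. For $\alpha=3/2$ you do exactly what the paper does and invoke Theorem \ref{thm:ExistenceOfMaximizer}. For $0\le\alpha<3/2$, however, the paper gives no argument of its own and simply cites \cite[Theorem 7 (iii)]{van2023some}. You instead give a self-contained direct-method proof built only from the Section 3 tools:
\begin{enumerate}
\item the uniform bound on $H$ from Proposition \ref{prop:FiniteSupH};
\item the inequality $P\ge 2\diam$ from \eqref{ineq:DiamPer};
\item Blaschke selection;
\item continuity of area, perimeter, capacity (Lemma \ref{lem:ContinuityOfLogCap}) and torsion under Hausdorff convergence of convex bodies.
\end{enumerate}

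Your route makes the proposition independent of the external reference and shows why the subcritical case is the easy one. Once $H$ is uniformly bounded, the factor $P^{-(3-2\alpha)}$, whose exponent is strictly positive, drives every degenerating sequence to $0<H_\alpha(\mb D)$, so no sharp asymptotics are needed.

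For the same reason, your appeal to Lemma \ref{lem:consequenceOfMakai} and Lemma \ref{lem:RatBound} in the degeneracy step is superfluous. Proposition \ref{prop:FiniteSupH} already bounds $H(\Om_n)$ uniformly in $n$, whereas Lemma \ref{lem:RatBound} only gives a subsequential limit. That lemma is genuinely needed only at $\alpha=3/2$, where the degenerate limit $\frac{1}{4\sqrt6}$ must be compared with the specific value $H(\mb D)$ rather than with $0$.

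The trade-off is brevity against completeness. The paper's citation costs one line. Your version additionally requires continuity of the perimeter, which the $\alpha=3/2$ argument never used, and you justify it correctly via Cauchy's formula and uniform convergence of support functions.
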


    \begin{proof}
        The case $0\leq \alpha < 3/2$ is shown in \cite[Theorem 7 (iii)]{van2023some}. The case $\alpha =3/2$ is exactly Theorem \ref{thm:ExistenceOfMaximizer}.
    \end{proof}

    In view of Theorem \ref{thm:HalphaJordan} one could think that at least for disc maximality among convex sets the threshold $\alpha=1$ could be critical. This is disproven in the next theorem.
    \begin{theorem}
        There exists $1<\alpha^*<\frac32$ such that
        \[
        \sup\{H_\alpha(\Om): \Om \text{ non-empty, open, bounded, convex}\}=H_\alpha(\mb D),\quad \alpha \in [0,\alpha^*],
        \]
        and the disc is the unique maximizer up to translations and dilations.
    \end{theorem}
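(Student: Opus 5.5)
We normalize $|\Om|=\pi$, as in the proof of Proposition \ref{prop:FiniteSupH}, and set $p=P(\Om)/(2\pi)\ge 1$. Using $T(\mb D)=\pi/8$, Proposition \ref{prop:SaintVenant}, Proposition \ref{prop:Makai} and Proposition \ref{prop:mainInequality} give
\[
\frac{H_\alpha(\Om)}{H_\alpha(\mb D)}=\left(\frac{T(\Om)}{T(\mb D)}\right)^{1/2}\frac{\cp(\overline\Om)}{p^{3-2\alpha}}\le \min\Big\{1,\frac{2}{\sqrt3\,p}\Big\}\,c(p)\,p^{2\alpha-3}=:G_\alpha(p),
\]
with $c(p)=(p+\sqrt{5p^2+4})/4$. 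Note that $c(p)\le p$ for $p\ge1$, since $\sqrt{5p^2+4}\le 3p$ is equivalent to $p\ge 1$. So Proposition \ref{prop:mainInequality} is always at least as good as Proposition \ref{prop:Polya}. Also $c(1)=1$, so $G_\alpha(1)=1$. The whole task is therefore to show $G_\alpha(p)\le 1$ on $[1,\infty)$ for $\alpha\in[0,\alpha^*]$, with equality only at $p=1$.

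First I reduce to the interval $[1,2/\sqrt3]$. For $p\ge 2/\sqrt3$ we have
\[
G_\alpha(p)=\frac{2}{\sqrt3}\cdot\frac{c(p)}{p}\cdot p^{2\alpha-3}.
\]
Here $c(p)/p=\big(1+\sqrt{5+4/p^2}\big)/4$ is decreasing, and $p^{2\alpha-3}$ is nonincreasing because $\alpha\le 3/2$. Hence $G_\alpha$ is decreasing on $[2/\sqrt3,\infty)$ and $G_\alpha(p)\le G_\alpha(2/\sqrt3)$. So it suffices to control
\[
f_\alpha(p):=\frac{c(p)}{p}\,p^{2\alpha-2},\qquad p\in[1,2/\sqrt3].
\]

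Next I choose $\alpha^*$ via $f_\alpha$ on $[1,2/\sqrt3]$. Since $f_\alpha$ is increasing in $\alpha$ for $p\ge1$, it is enough to treat one value $\alpha^*>1$. The expansion at $p=1$ gives $(\log f_\alpha)'(1)=c'(1)-1+(2\alpha-2)=2\alpha-\tfrac73$, using $c'(1)=2/3$. This is negative for $\alpha<7/6$, so near $p=1$ we have $f_\alpha<1$.

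For the rest of the interval I argue as follows. At $\alpha=1$, $f_1(p)=c(p)/p<1$ strictly for $p>1$. I would check that $\log f_\alpha(p)=\log(c(p)/p)+2(\alpha-1)\log p$ stays negative on $(1,2/\sqrt3]$ for some explicit $\alpha^*\in(1,7/6)$, for instance $\alpha^*=1.05$. Concretely, I bound $\log(c(p)/p)\le -\kappa(p-1)$ with an explicit $\kappa$ obtained from concavity estimates on $\sqrt{5+4/p^2}$, and use $\log p\le p-1$. This reduces the claim to $2(\alpha^*-1)<\kappa$. Alternatively, a compactness and continuity argument in $\alpha$ combined with the strict derivative at $p=1$ gives the existence of some $\alpha^*>1$ abstractly.

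This step is the main obstacle: the bound must be uniform in $p$ up to the endpoint, and near $p=1$ it must be matched against the first-order term. The cleanest route is to verify that $\log(c(p)/p)/(p-1)$ is bounded above by a negative constant on $(1,2/\sqrt3]$; its limit as $p\to1$ is $c'(1)-1=-1/3$.

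Finally, for $\alpha\le\alpha^*$ we get $G_\alpha(p)\le 1$, with equality only at $p=1$, so $H_\alpha(\Om)\le H_\alpha(\mb D)$. For uniqueness, equality forces $p=1$, that is, equality in Proposition \ref{prop:Polya} (the case $\cp(\overline\Om)=1=p$). Hence $\Om=\mb D$ up to rigid motions and, after undoing the normalization, up to translations and dilations; a rotation of $\mb D$ is $\mb D$ itself.
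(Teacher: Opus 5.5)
Your proposal is correct and is essentially the paper's proof. It uses the same bound $\min\{1,\frac{2}{\sqrt3\,p}\}\,c(p)\,p^{2\alpha-3}$ from Propositions \ref{prop:SaintVenant}, \ref{prop:Makai} and \ref{prop:mainInequality}, the same monotonicity reduction of $p\ge 2/\sqrt3$ to the endpoint, and the same equality case $p=1$. The only difference is in closing the one-variable inequality on $[1,2/\sqrt3]$. The paper fixes $\alpha^*=9/8$ and uses $c(p)<p^{3/4}$ there. Your bound $\log(c(p)/p)\le-\kappa(p-1)$ with some $\kappa>0$ works equally well: it follows from $c(p)<p$ for $p>1$, continuity, and the limit $-1/3$ at $p=1$, combined with $\log p\le p-1$.
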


    \begin{proof}
        We pick $\alpha^*=9/8$. Let $\Om \in \mc C_\pi$. We denote $p:=\frac{P(\Om)}{2\pi}$ and $\beta=3-2\alpha$. From the normalization one has
        \begin{equation}
            \label{eq:RatioEstbeta}
            \frac{H_\alpha(\Om)}{H_\alpha(\mb D)}=\frac{H_{3/2}(\Om)}{H_{3/2}(\mb D)}p^{-\beta}.
        \end{equation}
        From the proof of Proposition \ref{prop:FiniteSupH} we know that for $1\leq p\leq \frac{2}{\sqrt3}$
        \begin{equation}
            \label{finiteUpperBound}
            \frac{H_{3/2}(\Om)}{H_{3/2}(\mb D)}\leq c(p),\quad c(p):=\frac{p+\sqrt{5p^2+4}}{4}.
        \end{equation}
        A simple exercise in calculus shows that
        \begin{equation}
            \label{ineq:GrowthBound}
            c(p)<p^{3/4},\quad \text{for }1< p\leq \frac{2}{\sqrt 3},
        \end{equation}
        with equality only at $p=1$. Let $0\leq \alpha \leq \alpha^*$. Then we have by the choice of $\alpha^*$ that $\beta \geq 3/4$. We obtain therefore:
        \[
        \frac{H_\alpha(\Om)}{H_\alpha(\mb D)}\leq c(p)p^{-\beta}\leq c(p)p^{-3/4}.
        \]
        If $1\leq p \leq 2/\sqrt{3}$ we can use \eqref{ineq:GrowthBound} directly to deduce the desired statement. Hence, suppose $p \geq 2/\sqrt{3}$. Let us denote $F$ as in the proof as Proposition \ref{prop:FiniteSupH}. Using \eqref{eq:RatioEstbeta} and following the proof of Proposition \ref{prop:FiniteSupH}, we estimate
        \begin{align}
            \frac{H_\alpha(\Om)}{H_\alpha(\mb D)}\leq F(p)p^{-\beta}\leq c\left(\frac{2}{\sqrt{3}}\right)\left(\frac{2}{\sqrt{3}}\right)^{-3/4}\leq 1,
        \end{align}
        using \eqref{ineq:GrowthBound} in the final inequality. If equality holds in the estimates above, we necessarily have that $p=1$. As $|\Om|=\pi$, this means that equality holds in the isoperimetric inequality which forces $\Om=\mb D$, up to rigid motions.
    \end{proof}

    \begin{remark}
        It is clear that the choice of $\alpha^*$ in the proof above is not optimal. If one optimizes the upper bound obtained from Propositions \ref{prop:SaintVenant}, \ref{prop:Makai} and \ref{prop:mainInequality}, one can reach a value of
        \[
        \alpha^*\approx1.152.
        \]
        Since this is still far from the desired value of $3/2$ we give the simpler proof above.
    \end{remark}
    
    \section{Regularity of Maximizers for $H_\alpha$}
    The main result of this section is the following theorem.
    \begin{theorem}
        \label{thm:HalphaReg}
        Let $0\leq\alpha<\frac32$. Let $\Om \subset \R^2$ be a bounded, open, convex maximizer for
        \[
        \sup\{H_\alpha(\Om): \Om \text{ non-empty, open, bounded, convex}\},
        \]
        which exists by Proposition \ref{prop:existenceMaxHalpha}. Then $ \p \Om$ is $C^{1,1}$ regular.
    \end{theorem}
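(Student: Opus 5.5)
The plan is to apply the abstract regularity result of Lamboley, Novruzi and Pierre \cite{lamboley2012regularity} for shape optimization under convexity constraints. That result says, roughly, the following. Suppose $\Om$ is convex and locally minimizes $J(\Om)=R(\Om)+P(\Om)$ (or a functional whose leading term is perimeter). Suppose the remainder $R$ is $C^1$ along perturbations of the boundary radial function $u$ in $W^{1,\infty}$, with derivative $R'(u)$ controlled in a weak norm, for instance $|R'(u)\cdot v|\le C\|v\|_{L^1(\partial\Om)}$, or more generally $H^{-1/2}$-type bounds. Then the second-order structure of the perimeter forces $u''$ to be a measure with bounded density, so $\partial\Om$ is $C^{1,1}$.

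Since $\alpha<3/2$, the exponent $\beta=3-2\alpha$ of the perimeter is strictly positive. Hence maximizing $H_\alpha$ is equivalent to minimizing
\[
\beta\log P(\Om)-\tfrac12\log T(\Om)-\log\cp(\overline\Om)+\alpha\log|\Om|.
\]
Up to the positive factor $\beta$, this is $\log P$ plus a remainder $R$. This is where $\alpha<3/2$ is essential: in the critical case the perimeter term disappears and no regularizing term remains.

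The steps are as follows.

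\textbf{Step 1 (setup).} Parametrize nearby convex sets in polar coordinates around an interior point as $\{r<1/u(\theta)\}$, or via graphs of the boundary radius, with $u$ a $W^{1,\infty}$ perturbation. Check that $\log P$ has the convexity-constrained structure required by \cite{lamboley2012regularity}.

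\textbf{Step 2 (easy terms).} Treat the area and torsion terms. Area is smooth in $u$, with derivative given by integration over the boundary. For torsion, the shape derivative on convex (hence Lipschitz) domains is classical: $T'(\Om)[V]=\int_{\p\Om}|\nabla u_\Om|^2V\cdot\nu$. Because $\Om$ is convex, $\nabla u_\Om$ is bounded, so this derivative is bounded by $C\|V\cdot\nu\|_{L^1}$. Its continuity in $u$ follows from uniform gradient bounds on convex domains together with Hausdorff continuity.

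\textbf{Step 3 (main obstacle).} Prove that $I(\overline\Om)=-\log\cp(\overline\Om)$ is $C^1$ under Lipschitz perturbations of convex bodies, with derivative
\[
I'(\Om)[V]=-\pi\int_{\p\Om}\rho^2\,V\cdot\nu\,d\mathcal H^1,
\]
where $\rho$ is the density of the equilibrium measure, i.e. the normal derivative of the Green function of the exterior domain with pole at infinity.

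I would first try the route through the exterior conformal map. Write $\Phi:\mathbb C\setminus\overline{\mb D}\to\mathbb C\setminus\overline\Om$ with $\Phi(z)=\cp(\overline\Om)z+O(1)$. The capacity is the leading coefficient of $\Phi$, and $\rho=\frac1{2\pi}|(\Phi^{-1})'|$ on $\p\Om$. Alternatively, one can use Hadamard's variational formula for the Robin constant of the Green function $g_\Om(z,\infty)$.

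Since $\Om$ is convex, the exterior domain satisfies an exterior cone condition with angles at least $\pi$. Consequently $|\nabla g|$ is bounded near $\p\Om$: it blows up only at reentrant corners, and the exterior of a convex set has none. This gives $\rho\in L^\infty$, so the derivative is again $L^1$-controlled. Continuity of $\rho$ in $L^1(\p\Om)$ under Hausdorff convergence of convex sets then follows from uniform boundedness plus convergence of the conformal maps (Carathéodory kernel convergence).

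\textbf{Step 4 (conclusion).} Combine Steps 2 and 3 to verify the hypotheses of \cite{lamboley2012regularity}. Then deduce that $\p\Om$ is $C^{1,1}$.
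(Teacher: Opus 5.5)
Your overall architecture matches the paper's: the gauge function $u$, Theorem \ref{thm:Lamboley}, perimeter as the only coercive term when $\alpha<3/2$, area and torsion handled by known results, and capacity as the real difficulty. However, Step 3 rests on a false claim. Convexity of $\Om$ does \emph{not} give $\rho\in L^\infty(\p\Om)$; you have the corner behaviour backwards. If $\Om$ has a vertex of interior angle $\theta<\pi$, the exterior domain $\mathbb C\setminus\overline\Om$ has opening $2\pi-\theta>\pi$ there, which is a reentrant corner \emph{for the exterior domain}. The Green function with pole at infinity behaves like $r^{\pi/(2\pi-\theta)}$, so $\rho\sim r^{\pi/(2\pi-\theta)-1}$ is unbounded. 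Examples are $r^{-1/3}$ at the vertices of a square, and $\frac{1}{\pi\sqrt{(x-a)(b-x)}}$ in the degenerate case of a segment. The supporting half-plane at each boundary point only gives an interior ball condition for the exterior domain, hence a Hopf-type \emph{lower} bound on $\rho$. Nothing excludes a priori that the maximizer has corners, so you cannot use boundedness of $\rho$ at $u_0$; that is essentially the regularity you want to prove. What convexity does give is $\rho\in L^p(\p\Om)$ for some $p>2$ (Lemma \ref{lem:NovagaRuffiniConvex}). Hence the density of $r'(u_0)$, which contains $2\pi\rho^2/u_0^3$, is only in $L^{p/2}$ with $p/2>1$. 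The missing step is a bootstrap using the graded form of Theorem \ref{thm:Lamboley}, where an $L^s$ density implies $u_0\in W^{2,s}$. First you get $u_0\in W^{2,p/2}\subset C^{1,1-2/p}$. For $C^{1,\gamma}$ boundaries the equilibrium density is bounded \cite[Lemma 3.6]{goldman2018minimizers}, so the density of $r'(u_0)$ is in $L^\infty$. A second application then gives $u_0\in W^{2,\infty}$, i.e.\ $C^{1,1}$.

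There are further points to fix. First, hypothesis (i) of Theorem \ref{thm:Lamboley} requires $u\mapsto I(E_u)$ to be $C^1$ on a whole $W^{1,\infty}$-neighbourhood of $u_0$, and this neighbourhood contains non-convex sets. Your continuity argument (uniform bounds on $\rho$ plus Carath\'eodory convergence of convex sets) does not reach these sets even granting the bound, and the conformal-map or Hadamard routes presuppose boundary regularity you do not have. The paper sidesteps this by pulling back the energy, $J_h(\mu)=\iint -\log|\Phi_h(x)-\Phi_h(y)|\,d\mu\,d\mu$ on the fixed set. It proves $C^1$-dependence for an arbitrary nonpolar compact set (Proposition \ref{prop:C1Diff}), with the derivative in kernel form and continuity coming from weak convergence of the equilibrium measures. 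The boundary formula is needed only at the convex set itself, via the jump relations for the single layer potential (Lemma \ref{lem:FormulaAtConvexSet}). Second, with $\rho$ the equilibrium density the constant is $-2\pi$, not $-\pi$; you can check this on the disc under dilation. Third, $\log P$ is not of the form $\int_{\T}G(\theta,u,u')\,d\theta$, so you need a splitting such as $\log P=(\log P-P/P(E_{u_0}))+P/P(E_{u_0})$, whose first part has zero derivative at $u_0$.
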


    \subsection{Notation}
    We introduce some notation. First, since we are dealing with the logarithmic potential energy, we work with compact sets instead of open sets as before. Fix a nonpolar compact set $E \subset \R^2$. Denote the space
    \[
    X:=W^{1,\infty}(E;\R^2),\quad ||h||_X:=||h||_{L^\infty}+\Lip(h;E), \quad U:=\{h \in X:\Lip(h;E)< 1\}.
    \]
    For $h \in U$, denote the map $\Phi_h:E\to \Phi_h(E)\subset \R^2$, $\Phi_h(x):=x+h(x)$ and $E_h:=\Phi_h(E)$. Moreover, let
    \[
    I(h):=I(E_h)=\inf_{\mu \in \mc P(E_h)}\int_{E_h\times E_h}\log(|x-y|^{-1})d\mu(x)d\mu(y).
    \]
    We can pull back the variational problem which defines the logarithmic potential energy as follows. Denote the kernel
    \[
    K_h(x,y):=-\log|x+h(x)-(y+h(y))|.
    \]
    As $h \in U$, the map $\Phi_h:E\to E_h$ is bi-Lipschitz. Therefore, every probability measure $\tilde \mu \in \mc P(E_h)$ can be written uniquely as $\tilde \mu =(\Phi_h)_*\mu$ for $\mu \in \mc P(E)$. This implies in particular that
    \[
    I(h)=\min_{\mu \in \mc P(E)} J_h(\mu),\quad J_h(\mu):=\int\int K_h(x,y)d\mu(x)d\mu(y).
    \]
    We denote a minimizer for $J_h$ by $\mu_h$. This means by definition, that $(\Phi_h)_*\mu_h$ is the equilibrium measure of $E_h$. Finally, we introduce for $h\in U$, $q \in X$ the kernel
    \[
    B_{h,q}(x,y):=-\frac{(\Phi_h(x)-\Phi_h(y))\cdot (q(x)-q(y))}{|\Phi_h(x)-\Phi_h(y)|^2},\quad x,y \in E,\;x\neq y.
    \]
    \subsection{Differentiability of Logarithmic Capacity}
    
    \begin{proposition}
        \label{prop:C1Diff}
        The map $h\mapsto I(h)$ is $C^1$ on the open set $U$. For $h \in U$ and $q \in X$ we have
        \begin{equation}
            \label{eq:GeneralKernFormDiff}
            DI(h)[q]=-\int\int_{E\times E}\frac{(\Phi_h(x)-\Phi_h(y))\cdot(q(x)-q(y))}{|\Phi_h(x)-\Phi_h(y)|^2}d\mu_h(x)d\mu_h(y).
        \end{equation}
    \end{proposition}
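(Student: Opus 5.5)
We work throughout with the pulled-back problem $I(h)=\min_{\mu\in\mc P(E)}J_h(\mu)$ on the fixed set $E$. The plan is to prove a two-sided first-order expansion of the form
\[
I(h+q)=I(h)+\iint B_{h,q}\,d\mu_h\,d\mu_h+o(\|q\|_X),
\]
then show that the candidate derivative depends continuously on $h$. The basic pointwise facts come from the bi-Lipschitz bounds. For $h\in U$ one has
\[
(1-\Lip h)|x-y|\le|\Phi_h(x)-\Phi_h(y)|\le(1+\Lip h)|x-y|.
\]
Consequently, for $h,h+q\in U$,
\[
K_{h+q}(x,y)-K_h(x,y)=-\log\Big|\tfrac{\Phi_h(x)-\Phi_h(y)+q(x)-q(y)}{\Phi_h(x)-\Phi_h(y)}\Big|.
\]
The ratio $w:=(q(x)-q(y))/|\Phi_h(x)-\Phi_h(y)|$ satisfies $|w|\le \Lip(q)/(1-\Lip h)$ uniformly in $x\neq y$. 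A Taylor expansion of $-\log|e+w|$ for a unit vector $e$ then gives
\[
\bigl|K_{h+q}-K_h-B_{h,q}\bigr|\le C\|q\|_X^2
\]
uniformly on $E\times E$ off the diagonal, with $|B_{h,q}|\le C\|q\|_X$. This is the key observation: the singularity of the logarithmic kernel cancels in the difference, so all perturbation terms are \emph{bounded} kernels.

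\textbf{Upper and lower bounds.} Testing $J_{h+q}$ with $\mu_h$ gives
\[
I(h+q)\le J_{h+q}(\mu_h)=I(h)+\iint B_{h,q}\,d\mu_h d\mu_h+O(\|q\|^2).
\]
Testing $J_h$ with $\mu_{h+q}$ gives
\[
I(h)\le I(h+q)-\iint B_{h,q}\,d\mu_{h+q}d\mu_{h+q}+O(\|q\|^2).
\]
So it suffices to show that $\iint B_{h,q}\,d\mu_{h+q}^{\otimes2}-\iint B_{h,q}\,d\mu_h^{\otimes2}=o(\|q\|_X)$. Writing $q=\|q\|\hat q$, this amounts to
\[
\sup_{\|\hat q\|_X\le1}\Big|\iint B_{h,\hat q}\,d(\mu_{h+q}^{\otimes2}-\mu_h^{\otimes2})\Big|\to0\quad\text{as } q\to0.
\]

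\textbf{Main obstacle: stability of equilibrium measures.} The step I expect to be hardest is controlling $\mu_{h+q}$ against $\mu_h$. The kernels $B_{h,\hat q}$ are bounded but discontinuous at the diagonal, so weak-$*$ convergence alone does not suffice, and uniformity in $\hat q$ is needed as well. I would control this in energy norm.

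First, uniqueness and weak-$*$ convergence. The logarithmic kernel is positive definite on signed measures of total mass zero with finite energy. Since $J_h$ is the pullback of the strictly convex logarithmic energy on $E_h$ (nonpolar, because bi-Lipschitz images preserve nonpolarity), $\mu_h$ is unique. Moreover $\mu_{h+q}\rightharpoonup\mu_h$ by the two-sided estimates above together with lower semicontinuity of $J_h$.

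Second, a quantitative estimate. Set $\nu=(\Phi_h)_*(\mu_{h+q}-\mu_h)$. Using the equilibrium (Frostman) inequality for $\mu_h$ and the kernel bound, I expect
\[
\mathcal E(\nu)\le J_h(\mu_{h+q})-J_h(\mu_h)\le C\|q\|.
\]
Here $\mathcal E(\nu)=\int\!\!\int\log|x-y|^{-1}\,d\nu\,d\nu\ge0$ is the logarithmic energy of a mass-zero measure. Hence $\mathcal E(\nu)\to0$.

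Third, bounding the test integrals. Write $d(\mu_{h+q}^{\otimes 2}-\mu_h^{\otimes 2})$ as $\nu\otimes\mu_{h+q}+\mu_h\otimes\nu$ after pushforward. It then remains to bound $\iint B\,d\nu\,d\sigma$ by $C\,\mathcal E(\nu)^{1/2}$, uniformly over $\hat q$ and with $\sigma$ of finite energy. For this I would use the Fourier representation of the logarithmic energy on mass-zero measures, $\mathcal E(\nu)=c\int|\hat\nu(\xi)|^2|\xi|^{-2}\,d\xi$, i.e. the $\dot H^{-1}$ norm. The claim is that $x\mapsto\int B_{h,\hat q}(x,y)\,d\sigma(y)$ lies in $\dot H^1$ with uniformly bounded norm. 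This should follow because $B$ behaves like a Calderón--Zygmund-type kernel of order zero composed with Lipschitz maps (compare with $\nabla\log$ convolved against the Lipschitz field $\hat q$).

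If this harmonic-analysis route becomes too technical, a fallback is to prove only continuity of $DI$ at the level of sequences via compactness. One would show that $\mu_{h_n}\otimes\mu_{h_n}$ does not charge neighbourhoods of the diagonal uniformly, since equilibrium measures of bi-Lipschitz-equivalent sets have uniformly bounded energies, so uniform integrability of $\log|x-y|^{-1}$ holds. Truncating $B$ near the diagonal then reduces everything to continuous kernels. To obtain uniformity in $\hat q$, I would use equicontinuity of the truncated kernels in $\hat q$ on compact sets and the Arzelà--Ascoli theorem.

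\textbf{Conclusion.} These steps give Fréchet differentiability with the stated formula \eqref{eq:GeneralKernFormDiff}. The same convergence arguments, applied with $B_{h_n,\hat q}\to B_{h,\hat q}$ uniformly, yield continuity of $h\mapsto DI(h)$ in operator norm, which is the $C^1$ statement.
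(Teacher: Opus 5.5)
Your proposal is correct. The skeleton is exactly the paper's: the bounded-kernel expansion of $K_{h+q}-K_h$, plus the two-sided bracketing obtained by testing $J_{h+q}$ with $\mu_h$ and $J_h$ with $\mu_{h+q}$. Your fallback is essentially the paper's Lemma~\ref{lem:MovingConvergence}: weak convergence $\mu_{h+q}\rightharpoonup\mu_h$ from the minimizing-sequence estimate and uniqueness, a cutoff of $B$ near the diagonal, and Arzel\`a--Ascoli over $n$ and $\|k\|_X\le 1$. The one difference is the near-diagonal control. The paper uses that $\mu_h\otimes\mu_h$ does not charge the diagonal, together with the Portmanteau bound $\limsup_n\eta_n(F_{2\delta})\le\eta(F_{2\delta})$. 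You instead use the uniform energy bound, which by Chebyshev gives $\eta_n(F_\delta)\le C/\log(1/\delta)$ uniformly in $n$. That non-concentration, rather than genuine uniform integrability, is what the energy bound delivers, and it suffices. For continuity of $DI$ the paper splits exactly as you do, with the moving-kernel bound $|B_{h_n,k}-B_{h,k}|\le C_\lambda\|h_n-h\|_X$.

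Your main route is genuinely different.

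\textbf{The energy identity.} By Frostman, your mass-zero measure $\nu$ satisfies $\mathcal E(\nu)=J_h(\mu_{h+q})-I(h)$ with equality, not just inequality.

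\textbf{The $\dot H^1$ claim.} This needs no Calder\'on--Zygmund theory. Transport to $E_h$ and extend $\hat q\circ\Phi_h^{-1}$ to a Lipschitz map on $\R^2$. Then $|\nabla_x B(x,y)|\lesssim |x-y|^{-1}$. Since $\int_{B_R}\frac{dx}{|x-y|\,|x-z|}=2\pi\log\frac{1}{|y-z|}+O(1)$, the function $F_\sigma(x):=\int B(x,y)\,d\sigma(y)$ satisfies $\|\nabla F_\sigma\|_{L^2(B_R)}^2\lesssim \mathcal E(\sigma)+C$. Moreover $F_\sigma$ is bounded and continuous because $\sigma$ has no atoms, so pairing it with $\nu$ through $\nabla U^\nu\in L^2$ is legitimate.

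\textbf{The payoff.} The bracketing gives $\mathcal E(\nu)\le \iint B_{h,q}\,d(\mu_h^{\otimes 2}-\mu_{h+q}^{\otimes 2})+O(\|q\|_X^2)\le C\|q\|_X\,\mathcal E(\nu)^{1/2}+C\|q\|_X^2$. Hence $\mathcal E(\nu)^{1/2}=O(\|q\|_X)$, the Taylor remainder is $O(\|q\|_X^2)$, and $DI$ is locally Lipschitz. The paper's compactness argument yields only qualitative $C^1$ with no modulus.

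\textbf{The trade-off.} The paper's route needs nothing beyond weak convergence, Portmanteau and Arzel\`a--Ascoli. It avoids the positivity of mass-zero energy, $\dot H^{-1}$ duality and Lipschitz extension.
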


    \begin{remark}
        For compact sets with sufficiently regular boundary and under smooth normal perturbations, the differentiability of logarithmic capacity, or equivalently logarithmic potential energy, follows from Hadamard’s variational formula for the Green function with pole at infinity; see \cite[Section 3, formula (31)]{SchifferHadamard}.\\
        First variation formulas are also available without any boundary smoothness beyond convexity for Minkowski perturbations, expressed in terms of support functions; in the planar Laplacian case, see \cite[Proposition 10.1]{AkmanLewisSaariVogel+2021+247+302}. Such formulas, however, concern one-parameter variations within the convex class and do not provide the $C^1$-differentiability under general $W^{1,\infty}$-deformations needed below to apply Theorem \ref{thm:Lamboley}. Proposition \ref{prop:C1Diff} establishes this stronger property.
    \end{remark}

    \begin{proof}[Proof of Proposition \ref{prop:C1Diff}]
        Fix $h \in U$. Suppose $q_n \to 0$ in $X$. To show the differentiability we want to prove that
        \begin{equation}
            \label{eq:C1LogCap}
            I(h+q_n)-I(h)-DI(h)[q_n]=o(||q_n||_X),
        \end{equation}
        where $DI(h)[q_n]$ is given by \eqref{eq:GeneralKernFormDiff}.\\
        We write
        \[
        \mu_n:=\mu_{h+q_n},\quad \mu:=\mu_h.
        \]
        By Lemma \ref{lem:KernelExpansion} below, we have
        \[
        I(h+q_n)-I(h)\leq \int\int(K_{h+q_n}-K_h)d\mu d\mu=\int\int B_{h,q_n}d\mu d\mu+O(\Lip(q_n)^2).
        \]
        Hence
        \begin{equation}
            \label{eq:UpperBoundC1}
            I(h+q_n)-I(h)-DI(h)[q_n]\leq o(||q_n||_X).
        \end{equation}
        For the reverse inequality we use first the definition of $\mu_n$ and $\mu$ and then Lemma \ref{lem:KernelExpansion} to estimate
        \begin{align}
            \label{ineq:LowerboundMoving}
            I(h+q_n)-I(h)&=J_{h+q_n}(\mu_n)-I(h) \nonumber \\
            &\geq J_{h+q_n}(\mu_n)-J_h(\mu_n) \nonumber \\
            &=\int\int(K_{h+q_n}-K_h)d\mu_nd\mu_n \nonumber \\
            &\geq\int\int B_{h,q_n}d\mu_n d\mu_n-C\Lip(q_n)^2.
        \end{align}
        Note that from Lemma \ref{lem:MovingConvergence} below applied with $g_n :=h$, $\tilde h_n =h+q_n$ and $k=q_n/||q_n||_X$, it follows
        \begin{equation}
            \label{eq:RemovingMoving}
            \int\int B_{h,q_n}d\mu_nd\mu_n=\int\int B_{h,q_n}d\mu d\mu+o(||q_n||_X)=DI(h)[q_n]+o(||q_n||_X).
        \end{equation}
        Combining \eqref{ineq:LowerboundMoving} and \eqref{eq:RemovingMoving} we obtain
        \begin{equation}
            \label{eq:LowerBoundC1}
            I(h+q_n)-I(h)-DI(h)[q_n]\geq -o(||q_n||_X).
        \end{equation}
        The above inequality together with \eqref{eq:UpperBoundC1} proves differentiability.\\
        Next, we prove the continuity of
        \[
        DI:U \to X^*,\quad h \mapsto DI(h).
        \]
        Fix $h \in U$. Since $\Lip(h;E)\leq \alpha<1$, we have $\lambda:=1-\alpha>0$. Suppose $h_n \to h$ in $X$. Denote $\eta_n :=\mu_{h_n}\otimes \mu_{h_n}$ and $\eta:=\mu_{h}\otimes \mu_{h}$. We have for $k \in X$ with $||k||_X\leq 1$
        \begin{align*}
            DI(h_n)[k]-DI(h)[k]&=\left(\int B_{h_n,k}d\eta_n-\int B_{h_n,k}d\eta\right)+\int(B_{h_n,k}-B_{h,k})d\eta\\
            &=:A_n(k)+B_n(k).
        \end{align*}
        From Lemma \ref{lem:MovingConvergence} applied with $g_n=\tilde h_n=h_n$, it follows
        \begin{equation}
            \label{unifConvA}
            \sup_{||k||_X\leq 1}|A_n(k)|\to 0,\quad \text{as }n \to \infty.
        \end{equation}
        For $B_n(k)$ note that by definition of $B_{h_n,k}$ one obtains
        \begin{equation}
            \label{BnLipschitzBound}
            |B_{h_n,k}(x,y)-B_{h,k}(x,y)|\leq C_\lambda||h_n-h||_X.
        \end{equation}
        Indeed, by Lemma \ref{lem:KernelExpansion},
        \[
        K_{h_n+||h_n-h||_Xk}-K_{h_n}
        =
        ||h_n-h||_X B_{h_n,k}
        +
        O_\lambda(||h_n-h||_X^2).
        \]
        On the other hand, by Lemma \ref{lem:KernelExpansion} and the linearity of $B_{h,\cdot}$,
        \[
        \begin{aligned}
        K_{h_n+||h_n-h||_Xk}-K_{h_n}
        &=
        (K_{h+(h_n-h)+||h_n-h||_Xk}-K_h)
        -(K_{h+(h_n-h)}-K_h)
        \\
        &=
        B_{h,(h_n-h)+||h_n-h||_Xk}
        -B_{h,h_n-h}
        +
        O_\lambda(||h_n-h||_X^2)
        \\
        &=
        ||h_n-h||_X B_{h,k}
        +
        O_\lambda(||h_n-h||_X^2).
        \end{aligned}
        \]
        Therefore,
        \[
        ||h_n-h||_X
        |B_{h_n,k}-B_{h,k}|
        \leq
        C_\lambda ||h_n-h||_X^2,
        \]
        and dividing the above by $||h_n-h||_X$ gives \eqref{BnLipschitzBound}.\\
        Hence from \eqref{BnLipschitzBound},
        \begin{equation}
            \label{unifConvB}
            \sup_{||k||_X\leq 1}|B_n(k)|\leq C_\lambda ||h_n-h||_X\to 0,\quad \text{as }n \to \infty.
        \end{equation}
        Combining \eqref{unifConvA} and \eqref{unifConvB} we have shown
        \[
        \sup_{||k||_X\leq 1}| DI(h_n)[k]-DI(h)[k]|\to 0, \quad \text{as }n \to \infty
        \]
        which is exactly the desired continuity.
    \end{proof}

    \begin{lemma}
        \label{lem:KernelExpansion}
        Suppose $h \in X$ with $\Lip(h)\leq \alpha <1$. Write $\lambda:=1-\alpha >0$. For each $q \in X$ with $\Lip(q)$ small enough one has the expansion
        \[
        K_{h+q}(x,y)-K_h(x,y)=B_{h,q}(x,y)+R_{h,q}(x,y),
        \]
        where
        \[
        |B_{h,q}|\leq C_\lambda \Lip(q),\quad |R_{h,q}|\leq C_\lambda\Lip(q)^2.
        \]
    \end{lemma}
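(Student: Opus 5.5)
Throughout, fix $x\neq y$ in $E$ and write $a:=\Phi_h(x)-\Phi_h(y)$ and $b:=q(x)-q(y)$. The plan is to reduce everything to a pointwise expansion of $-\log|a+b|$ around $a$, using that $b$ is small \emph{relative to} $a$. This relative smallness comes from the bi-Lipschitz property of $\Phi_h$.

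First I will record the two basic bounds. Since $\Lip(h)\leq\alpha<1$, the reverse triangle inequality gives
\[
|a|\geq |x-y|-|h(x)-h(y)|\geq \lambda|x-y|,\qquad |b|\leq \Lip(q)|x-y|.
\]
Setting $t:=|b|/|a|$, these give $t\leq \Lip(q)/\lambda$. The bound on $B_{h,q}$ is then immediate from Cauchy--Schwarz, since $B_{h,q}=-a\cdot b/|a|^2$:
\[
|B_{h,q}(x,y)|\leq \frac{|b|}{|a|}\leq \frac{\Lip(q)}{\lambda}.
\]
Here "$\Lip(q)$ small enough" will mean $\Lip(q)\leq\lambda/2$, so that $t\leq 1/2$. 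Under this condition $\Lip(h+q)<1$, hence $h+q\in U$ and the kernel $K_{h+q}$ is well defined off the diagonal.

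Next I will expand the kernel difference. By definition,
\[
K_{h+q}-K_h=-\log\frac{|a+b|}{|a|}=-\tfrac12\log\bigl(1+s\bigr),\qquad s:=\frac{2a\cdot b+|b|^2}{|a|^2}.
\]
The quantity $s$ satisfies $|s|\leq 2t+t^2\leq 5/4$, and also $1+s=|a+b|^2/|a|^2\geq(1-t)^2\geq 1/4$. So $s$ lies in a fixed compact subset of $(-1,\infty)$, namely $[-3/4,5/4]$. On that interval Taylor's theorem gives $|\log(1+s)-s|\leq Cs^2$ with an absolute constant $C$. Therefore
\[
K_{h+q}-K_h=-\frac{a\cdot b}{|a|^2}-\frac{|b|^2}{2|a|^2}+O(s^2)=B_{h,q}+R_{h,q}.
\]
Since $s^2\leq (2t+t^2)^2\leq Ct^2$, the remainder satisfies
\[
|R_{h,q}|\leq \tfrac12 t^2+Cs^2\leq C t^2\leq C_\lambda\Lip(q)^2,
\]
with $C_\lambda=C/\lambda^2$.

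The only real obstacle is making sure the argument of the logarithm stays bounded away from $0$ uniformly in $x\neq y$. The factor $|x-y|$ appears in both $|a|$ and $|b|$, so it cancels in the ratio $t$. This cancellation is exactly why the relevant smallness is $\Lip(q)$ rather than $\|q\|_X$, and why the constants depend only on $\lambda$. The diagonal $x=y$ is excluded, but it is $\mu\otimes\mu$-null for any measure of finite energy, so it plays no role.
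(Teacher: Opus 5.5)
Your proof is correct and follows essentially the same route as the paper. Both arguments use $|\Phi_h(x)-\Phi_h(y)|\geq\lambda|x-y|$ to get the relative bound $|q(x)-q(y)|/|\Phi_h(x)-\Phi_h(y)|\leq\Lip(q)/\lambda$ and then Taylor-expand the logarithm; the paper expands $-\log|e+v|$ in the small vector $v$, uniformly over unit vectors $e$, whereas you square and expand the scalar $\log(1+s)$, which is cosmetic but gives explicit constants and the explicit threshold $\Lip(q)\leq\lambda/2$ (which also ensures $h+q\in U$).
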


    \begin{proof}
        By assumption on $h$ we have that $|\Phi_h(x)-\Phi_h(y)|\geq \lambda|x-y|$. Hence
        \begin{equation}
            \label{ineq:RatioBound}
            \frac{|q(x)-q(y)|}{|\Phi_h(x)-\Phi_h(y)|}\leq \lambda^{-1}\Lip(q).
        \end{equation}
        We write
        \[
        K_{h+q}(x,y)-K_h(x,y)=-\log|e+a|,\quad e=\frac{\Phi_h(x)-\Phi_h(y)}{|\Phi_h(x)-\Phi_h(y)|},\quad a:=\frac{q(x)-q(y)}{|\Phi_h(x)-\Phi_h(y)|}.
        \]
        Note that for $\Lip(q)$ small enough we have from \eqref{ineq:RatioBound} that $|a|<1/2$.\\
        The map $a\mapsto -\log|e+a|$ is in $C^2$ on $|a|<1/2$ with bounds uniform in $e \in S^1$. Hence we have the Taylor expansion at $a=0$ as
        \[
        -\log|e+a|=-e\cdot a+O(|a|^2)=-\frac{(\Phi_h(x)-\Phi_h(y))\cdot(q(x)-q(y))}{|\Phi_h(x)-\Phi_h(y)|^2}+O\left(\frac{|q(x)-q(y)|^2}{|\Phi_h(x)-\Phi_h(y)|^2}\right).
        \]
        The claim follows easily from \eqref{ineq:RatioBound}.
    \end{proof}

    \begin{lemma}
    \label{lem:MovingConvergence}
    Let $g_n\to h$ and $\tilde h_n\to h$ in $X$, and assume that $g_n,\tilde h_n,h$ all lie eventually in a common uniformly bi-Lipschitz neighborhood. Let
    \[
        \mu_n:=\mu_{\tilde h_n},\qquad \mu:=\mu_h,
    \]
    and set
    \[
        \eta_n:=\mu_n\otimes\mu_n,\qquad \eta:=\mu\otimes\mu.
    \]
    Then
    \begin{equation*}
        \sup_{\|k\|_X\leq 1}
        \left|
            \int\int B_{g_n,k}\,d\mu_n d\mu_n
            -
            \int\int B_{g_n,k}\,d\mu d\mu
        \right|
        \to 0,\quad \text{as }n \to \infty.
    \end{equation*}
\end{lemma}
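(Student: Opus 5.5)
The plan is to prove first that $\mu_n\rightharpoonup\mu$ weakly-$*$ in $\mc P(E)$, and then that the family $\{B_{g_n,k}:n\in\mb N,\ \|k\|_X\leq 1\}$ is uniformly bounded and equicontinuous away from the diagonal. The diagonal itself will be handled by a uniform smallness estimate for $\eta_n$ and $\eta$ near $\{x=y\}$.

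\textbf{Step 1 (convergence of minimizers).} Let $\lambda>0$ be a common lower bi-Lipschitz constant. Lemma \ref{lem:KernelExpansion} gives
\[
|K_{\tilde h_n}-K_h|\leq C_\lambda\Lip(\tilde h_n-h)\to 0
\]
uniformly off the diagonal. Hence $|J_{\tilde h_n}(\nu)-J_h(\nu)|\leq C_\lambda\Lip(\tilde h_n-h)$ for every $\nu\in\mc P(E)$ of finite energy, and so $I(\tilde h_n)\to I(h)$. Take any weak-$*$ limit point $\nu$ of $\mu_n$ (compactness of $\mc P(E)$). By lower semicontinuity of $J_h$, since $K_h$ is bounded below on the compact set $E\times E$,
\[
J_h(\nu)\leq\liminf J_h(\mu_n)\leq \liminf\big(J_{\tilde h_n}(\mu_n)+C\Lip(\tilde h_n-h)\big)=I(h).
\]
By uniqueness of the equilibrium measure (strict convexity of the logarithmic energy on probability measures of a compact set), pulled back via the bijection $\Phi_h$, we get $\nu=\mu$. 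Thus $\eta_n\rightharpoonup\eta$. Moreover $J_h(\mu_n)\to J_h(\mu)$, i.e. $\int K_h\,d\eta_n\to\int K_h\,d\eta$.

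\textbf{Step 2 (no concentration near the diagonal).} Let $\Delta_\delta:=\{|x-y|<\delta\}$. I claim that
\[
\limsup_n\eta_n(\Delta_\delta)+\eta(\Delta_\delta)\to0 \quad\text{as }\delta\to0.
\]
For this I will try to use more than weak convergence, namely the energy convergence $\int K_h\,d\eta_n\to\int K_h\,d\eta$ from Step 1. Fix $M$ large. The function $\min(K_h,M)$ is continuous and bounded, so $\int\min(K_h,M)\,d\eta_n\to\int\min(K_h,M)\,d\eta$. Subtracting, $\int(K_h-M)_+\,d\eta_n\to\int(K_h-M)_+\,d\eta$, which is small for $M$ large since $\eta$ has finite energy. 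On $\Delta_\delta$ we have $K_h\geq-\log(C\delta)$, so the tails are uniformly integrable. Since $\eta(\{x=y\})=0$, this gives $\eta_n(\Delta_\delta)\leq \eta(\overline{\Delta_{2\delta}})+o(1)$ via a cutoff, which is small. Actually only mass smallness is needed, and that follows directly from weak convergence together with $\eta(\{x=y\})=0$ (equilibrium measures of nonpolar sets have no atoms). Concretely, choose a continuous cutoff $\chi_\delta$ equal to $1$ on $\Delta_\delta$ and supported in $\Delta_{2\delta}$; then $\limsup\eta_n(\Delta_\delta)\leq\eta(\overline{\Delta_{2\delta}})\to0$.

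\textbf{Step 3 (splitting).} Write
\[
B_{g_n,k}=\chi_\delta B_{g_n,k}+(1-\chi_\delta)B_{g_n,k}.
\]
By Lemma \ref{lem:KernelExpansion} (or directly from its definition), $|B_{g_n,k}|\leq\lambda^{-1}\Lip(k)\leq\lambda^{-1}$, so the first part contributes at most $\lambda^{-1}(\eta_n(\Delta_{2\delta})+\eta(\Delta_{2\delta}))$, which is small uniformly in $k$ by Step 2. On $\{|x-y|\geq\delta\}$, the functions $(1-\chi_\delta)B_{g_n,k}$ are uniformly bounded, and they are uniformly Lipschitz on $E\times E$ with constant $C(\delta,\lambda)$: the numerator and denominator are Lipschitz with constants controlled by $\|g_n\|_X$ and $\|k\|_X\leq1$, and the denominator is at least $\lambda^2\delta^2$. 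By Arzel\`a–Ascoli this family is precompact in $C(E\times E)$. Weak-$*$ convergence is uniform on compact subsets of $C(E\times E)$, because $\|\eta_n-\eta\|$ is bounded by $2$ and an $\varepsilon$-net argument applies. Hence the second part tends to $0$ uniformly in $k$ and $n$. Letting $n\to\infty$ and then $\delta\to0$ concludes the proof.

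The main obstacle is the uniformity over $k$ near the diagonal. This is resolved because $B_{g_n,k}$ is uniformly bounded there, which is exactly where the bi-Lipschitz hypothesis enters, and because $\eta$ does not charge the diagonal. Identifying the limit of $\mu_n$ in Step 1 also requires the uniqueness of the equilibrium measure.
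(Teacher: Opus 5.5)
Your proposal is correct and follows essentially the same route as the paper. Step 1 gets $\mu_n\rightharpoonup\mu$ from the minimizing-sequence, lower-semicontinuity and uniqueness argument. Near the diagonal you use a cutoff, controlled by the uniform bound $|B_{g_n,k}|\le\lambda^{-1}$ together with $\eta(\{x=y\})=0$. Away from the diagonal you use an Arzel\`a--Ascoli $\varepsilon$-net argument. The differences are cosmetic: you bound the diagonal mass with a continuous test function rather than the closed-set portmanteau inequality, and, as you note yourself, the energy-convergence and uniform-integrability detour in your Step 2 is not needed.
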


\begin{proof}
    We divide the proof into two steps. First we prove the convergence of measures, then we prove that this is enough to pass to the limit uniformly in the kernel integral.\\
    \textbf{Step 1: Convergence of measures.} We want to prove here that
    \begin{equation}
        \label{eq:ConvergenceOfMeas}
        \eta_n \rightharpoonup \eta.
    \end{equation}
    Since $\tilde h_n\to h$ in $X$ and all maps lie eventually in a common uniformly bi-Lipschitz neighborhood, integrating Lemma \ref{lem:KernelExpansion} gives
    \begin{equation}
        \label{eq:JBound}
        |J_{\tilde h_n}(\nu)-J_h(\nu)|
        \leq C\|\tilde h_n-h\|_X
    \end{equation}
    for the atomless probability measures used below. Hence, using the minimality of $\mu_n$ for $J_{\tilde h_n}$, we estimate
    \begin{align}
        \label{eq:MinSeq}
        J_h(\mu_n)
        &\leq J_{\tilde h_n}(\mu_n)+C\|\tilde h_n-h\|_X  \notag\\
        &\leq J_{\tilde h_n}(\mu)+C\|\tilde h_n-h\|_X \notag\\
        &\leq J_h(\mu)+2C\|\tilde h_n-h\|_X .
    \end{align}
    In particular, \eqref{eq:MinSeq} says that $(\mu_n)$ is a minimizing sequence for $J_h$. Up to taking subsequences, $\mu_n\rightharpoonup \tilde\mu$. By lower semicontinuity of the logarithmic kernel, $\tilde\mu$ is minimizing for $J_h$. By uniqueness it follows that $\tilde\mu=\mu$. Since the convergence holds for all subsequences, we have shown that $\mu_n\rightharpoonup\mu$. The convergence \eqref{eq:ConvergenceOfMeas} follows by compactness of $E$.\\
    \textbf{Step 2: Uniform convergence of kernels.} We want to prove
    \begin{equation}
        \label{eq:KernelConv}
        \sup_{\|k\|_X\leq 1}
        \left|
            \int\int B_{g_n,k}\,d\mu_n d\mu_n
            -
            \int\int B_{g_n,k}\,d\mu d\mu
        \right|
        \to 0.
    \end{equation}
    Since all the maps lie in a common uniformly bi-Lipschitz neighborhood, there exists $\lambda>0$ such that
    \[
        |\Phi_{g_n}(x)-\Phi_{g_n}(y)|\geq \lambda |x-y|
    \]
    for all $n$ sufficiently large and all $x,y\in E$. Hence as in the proof of Lemma \ref{lem:KernelExpansion}
    \begin{equation}
        \label{eq:KernelUniformBound}
        |B_{g_n,k}(x,y)|
        \leq C_\lambda,
        \qquad x\neq y,\quad \|k\|_X\leq 1.
    \end{equation}
    Fix $\eps>0$. Since $\eta$ gives no mass to the diagonal, we can choose $\delta>0$ such that
    \begin{equation*}
        \eta(F_{2\delta})<\eps,
        \qquad
        F_\rho:=\{(x,y)\in E\times E: |x-y|\leq \rho\}.
    \end{equation*}
    From \eqref{eq:ConvergenceOfMeas} and because $F_{2\delta}$ is closed, we have
    \begin{equation}
        \label{eq:LimSupSmall}
        \limsup_{n\to\infty}\eta_n(F_{2\delta})
        \leq \eta(F_{2\delta})
        <\eps.
    \end{equation}
    Let $\chi_\delta$ be a smooth cutoff such that $\chi_\delta\equiv 0$ on $F_\delta$ and $\chi_\delta\equiv 1$ on $E\times E\setminus F_{2\delta}$. We claim that the family
    \[
        \mathcal F_\delta
        :=
        \{\chi_\delta B_{g_n,k}: n\in\mathbb N,\ \|k\|_X\leq 1\}
    \]
    is equicontinuous and uniformly bounded on $E\times E$. Uniform boundedness follows immediately from \eqref{eq:KernelUniformBound}. For equicontinuity close to the diagonal, the claim follows from the cutoff. In the set $E\times E\setminus F_\delta$, note that
    \[
        (x,y,a,b)\mapsto
        -\frac{(\Phi_{g_n}(x)-\Phi_{g_n}(y))\cdot(a-b)}
        {|\Phi_{g_n}(x)-\Phi_{g_n}(y)|^2}
    \]
    is uniformly Lipschitz, because $g_n\to h$ in $X$ and we stay a positive distance away from the diagonal. Moreover, the maps $k$ with $\|k\|_X\leq 1$ are uniformly bounded and uniformly Lipschitz. This yields the claim.\\
    Hence, from Arzelà-Ascoli, the family $\mathcal F_\delta$ is totally bounded in $C(E\times E)$. Thus, we can pick a finite set $\{\psi_1,\ldots,\psi_N\}\subset C(E\times E)$ such that for every $n$ and every $k$ with $\|k\|_X\leq 1$, there is an index $j(n,k)\in\{1,\ldots,N\}$ satisfying
    \[
        \|\chi_\delta B_{g_n,k}-\psi_{j(n,k)}\|_{L^\infty}<\eps.
    \]
    From the convergence \eqref{eq:ConvergenceOfMeas}, we have
    \[
        \max_{1\leq j\leq N}
        \left|
            \int \psi_j\,d\eta_n
            -
            \int \psi_j\,d\eta
        \right|
        \to 0.
    \]
    Hence,
    \[
        \sup_{\|k\|_X\leq 1}
        \left|
            \int \chi_\delta B_{g_n,k}\,d\eta_n
            -
            \int \chi_\delta B_{g_n,k}\,d\eta
        \right|
        \leq 2\eps+o(1).
    \]
    Finally, using the properties of the cutoff, the uniform bound \eqref{eq:KernelUniformBound}, and \eqref{eq:LimSupSmall}, we get
    \begin{align*}
        &\sup_{\|k\|_X\leq 1}
        \left|
            \int B_{g_n,k}\,d\eta_n
            -
            \int B_{g_n,k}\,d\eta
        \right| \\
        &\leq
        \sup_{\|k\|_X\leq 1}
        \left|
            \int \chi_\delta B_{g_n,k}\,d\eta_n
            -
            \int \chi_\delta B_{g_n,k}\,d\eta
        \right| \\
        &\qquad
        +
        \sup_{\|k\|_X\leq 1}
        \left|
            \int (1-\chi_\delta) B_{g_n,k}\,d\eta_n
            -
            \int (1-\chi_\delta) B_{g_n,k}\,d\eta
        \right| \\
        &\leq
        2\eps+o(1)
        +
        C_\lambda\bigl(\eta_n(F_{2\delta})+\eta(F_{2\delta})\bigr) \\
        &\leq
        2\eps+C_\lambda\eps+o(1).
    \end{align*}
    Since $\eps>0$ was arbitrary, this completes the proof of \eqref{eq:KernelConv} and hence the proof of the lemma.
\end{proof}
    In the case of a convex set $E$, we can simplify the formula for $DI(h)$ by using  \cite[Theorem 3.1]{goldman2018minimizers}:
    \begin{lemma}
        \label{lem:NovagaRuffiniConvex}
        Let $E$ be a compact convex body. Let $\mu_E$ be the equilibrium measure associated with $E$. Then there exists some $p=p(E)>2$ and $f_E \in L^p(\p E)$ such that $\mu_E=f_E \mc H^1 \llcorner \p E$.
    \end{lemma}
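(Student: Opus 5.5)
Alternatively, since the statement is exactly \cite[Theorem 3.1]{goldman2018minimizers}, one may simply invoke that result; the plan below reconstructs it via conformal mapping. The idea is to express the equilibrium measure through the exterior conformal map and to reduce the integrability of $f_E$ to a Hardy-space estimate.

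\textbf{Step 1: Representation of the equilibrium measure.} Let $\psi:\{|w|>1\}\to \R^2\setminus E$ be the Riemann map with $\psi(\infty)=\infty$ and $\psi(w)=\cp(E)w+O(1)$. Since $E$ is a convex body, $\p E$ is a rectifiable Jordan curve, so $\psi$ extends homeomorphically to the closure. By Carathéodory's theorem and the F.\ and M.\ Riesz theorem, $\psi'$ lies in $H^1$ and has nonzero nontangential boundary values a.e. Standard potential theory gives that $\mu_E$ is the harmonic measure of $\R^2\setminus E$ at $\infty$. Hence $\mu_E$ is the pushforward under $\psi$ of normalized arc length $d\theta/2\pi$ on $S^1$. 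Consequently $\mu_E=f_E\,\mc H^1\llcorner\p E$ with
\[
f_E(\psi(e^{i\theta}))=\frac{1}{2\pi|\psi'(e^{i\theta})|}.
\]
A change of variables then gives
\[
\int_{\p E}f_E^p\,d\mc H^1=(2\pi)^{-p}\int_0^{2\pi}|\psi'(e^{i\theta})|^{1-p}\,d\theta.
\]
So it suffices to show $1/\psi'\in H^{q}$ for some $q>1$, and then take $p=q+1>2$.

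\textbf{Step 2: Argument control from convexity.} The tangent direction of $\p E$ at $\psi(e^{it})$ is $\beta(t)=\arg\psi'(e^{it})+t+\pi/2$. Convexity of $E$ means $\beta$ is nondecreasing with total increase $2\pi$. Its jumps correspond to corners, and a jump has size $\pi-\theta_c$, where $\theta_c\in(0,\pi)$ is the interior angle; the interior angle is strictly positive because $E$ has nonempty interior. Hence every jump of $\beta$ is at most $\pi-\theta_{\min}<\pi$. Moreover, only finitely many jumps exceed any fixed $\eps>0$.

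\textbf{Step 3: Hardy-space estimate (main obstacle).} The aim is to show that $g:=\log\psi'(w)$, normalized at $\infty$, has imaginary part on the circle that is of bounded variation with all jumps of size $<\pi$. From this one should conclude $e^{-g}=1/\psi'\in H^q$ for some $q>1$.

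I would argue this by splitting $\arg\psi'$ on the circle into a finite sum of jump terms and a remainder. The finite sum consists of the jumps larger than a small $\eps$, each represented by an explicit factor $(1-\bar w_j/w)^{-\gamma_j}$ with $\gamma_j=\text{jump}_j/\pi<1$. The remainder is a BV function whose jumps are all $<\eps$, so its local oscillation is small. For the remainder, Zygmund's theorem gives $e^{\pm\tilde v}\in L^{s}$ for $s$ large; the relevant estimate is the Helson--Szeg\H{o}/Zygmund-type bound that if the oscillation is $\le c$, then $e^{\pm\tilde v}\in L^s$ for all $s<\pi/(2c)$.

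Each explicit factor contributes $|w-w_j|^{-\gamma_j}$ to $1/|\psi'|$. Such a factor is in $L^q$ exactly for $q<1/\gamma_j$, and $1/\gamma_j>1$. Since the singular points $w_j$ are disjoint, H\"older's inequality combines the finitely many factors with the remainder. This yields $1/\psi'\in L^q(S^1)$ for some $q>1$, indeed for any $q<\min_j 1/\gamma_j$ after shrinking slightly. This step is essentially Pommerenke's boundary-behaviour theory for close-to-convex maps, and it is where the real work lies.

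\textbf{Step 4: Conclusion.} With $q>1$ as above, the identity in Step 1 gives $f_E\in L^{p}(\p E)$ with $p=q+1>2$.
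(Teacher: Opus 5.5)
Your opening sentence is in fact all the paper does. It gives no argument of its own, imports the lemma directly from \cite[Theorem 3.1]{goldman2018minimizers}, and then uses it in Lemma~\ref{lem:FormulaAtConvexSet}. Your conformal-mapping reconstruction is therefore a genuinely different, self-contained route, and it is sound. Identifying $\mu_E$ with harmonic measure at $\infty$ and pulling back by $\psi$ reduces the claim to $|\psi'|^{-1}\in L^{q}(S^1)$ with $q=p-1>1$. Convexity then does two jobs: it makes the boundary values of $\arg\psi'$ of bounded variation, and it forces every jump to be $<\pi$ because interior angles are positive. The second fact is exactly what the corner factors $|w-w_j|^{-\gamma_j}$ with $\gamma_j<1$ need. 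Compared with the citation, your route relies on the boundary theory of conformal maps: F.\ and M.\ Riesz, the identification of $\arg\psi'(e^{it})+t+\pi/2$ with the tangent angle, and Zygmund's exponential integrability. In exchange it gives an explicit exponent: every $p<1+\pi/(\pi-\theta_{\min})$, where $\theta_{\min}$ is the smallest interior angle at a corner, and every $p<\infty$ if $\p E$ is $C^1$. This range is optimal for polygons. The argument is tied to the planar logarithmic case, which is all the paper needs.

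Three details need tightening; none of them is serious.

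\textbf{The corner factor.} The factor singular at $w_j$ is $(1-w_j/w)^{\pm\gamma_j}$. The one you wrote, $(1-\bar w_j/w)$, vanishes at $\bar w_j$ instead.

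\textbf{Poisson representation of $\arg\psi'$.} To read $\log|\psi'|$ on $S^1$ as the conjugate function of the boundary values of $\arg\psi'$, you need $\arg\psi'$ to be the Poisson integral of those values; otherwise a singular part would be invisible a.e. This does hold. The curves $\psi(\{|w|=r\})$ are convex for every $r>1$, so $\arg\psi'(re^{it})+t$ is nondecreasing with total increase $2\pi$. With the normalization $\arg\psi'(\infty)=0$, this gives $|\arg\psi'|\le 2\pi$ in $|w|>1$.

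\textbf{Local versus global oscillation.} Zygmund's theorem needs a global bound $\|v-m\|_\infty\le c$, whereas your remainder $v$ only has small \emph{local} oscillation. Its variation measure has atoms of mass at most $\eps$, so a Lebesgue-number argument gives $\delta>0$ and a mollifier $\rho_\delta$ with $\|v-v*\rho_\delta\|_\infty\le 2\eps$. Thus $v$ is a smooth function, whose conjugate is bounded, plus a function of sup norm at most $2\eps$. Zygmund's theorem applies to the latter and gives exponential integrability with exponents $s<\pi/(4\eps)$.
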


    \begin{lemma}
        \label{lem:FormulaAtConvexSet}
        Let $E$ be a compact convex body, and denote by $\nu$ its outer unit normal. Using the same notation as above, we have for any $k \in W^{1,\infty}(E,\mb R^2)$
        \[
        DI(0)[k]= -2\pi\int_{\p E} f_E^2 k\cdot \nu d\mc H^1.
        \]
    \end{lemma}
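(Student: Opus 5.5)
Set $h=0$ in Proposition \ref{prop:C1Diff}. Then $\Phi_0=\mathrm{id}$ and $\mu_0=\mu_E$, so
\[
DI(0)[k]=-\int\int_{E\times E}\frac{(x-y)\cdot(k(x)-k(y))}{|x-y|^2}\,d\mu_E(x)\,d\mu_E(y).
\]
The plan is to rewrite this double integral as a single boundary integral. By Lemma \ref{lem:NovagaRuffiniConvex}, $\mu_E=f_E\,\mc H^1\llcorner\p E$ with $f_E\in L^p(\p E)$ for some $p>2$, so every integral is over $\p E\times\p E$. The kernel is antisymmetric in $(x,y)$ apart from the factor $k(x)-k(y)$, which lets us symmetrize: $DI(0)[k]=-2\int\int \frac{(x-y)\cdot k(x)}{|x-y|^2}d\mu_E(y)\,d\mu_E(x)$. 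To justify splitting the integral, I note that $|x-y|^{-1}$ is integrable against $f_E\otimes f_E$ on a Lipschitz curve, since $f_E\in L^p$ with $p>2$. Alternatively, I can cut out $|x-y|<\eps$ symmetrically and let $\eps\to 0$, which gives a principal value. Writing $U^\mu(x)=\int\log|x-y|^{-1}d\mu_E(y)$ for the logarithmic potential, the inner integral is formally $\nabla U^\mu(x)\cdot k(x)$ with $\nabla U^\mu(x)=-\int\frac{x-y}{|x-y|^2}d\mu_E(y)$. So $DI(0)[k]=2\int_{\p E}\nabla U^\mu\cdot k\,d\mu_E$, where the gradient on $\p E$ is understood as the average of its two one-sided limits.

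Next I evaluate this gradient using potential theory. By Frostman, $U^\mu\equiv I(E)$ on $E$ (every point of a convex body is regular). Hence $\nabla U^\mu=0$ in the interior, and the tangential derivative along $\p E$ vanishes. Outside $E$, $U^\mu$ is harmonic, and $\mu_E$ is its distributional Laplacian up to sign: $-\Delta U^\mu=2\pi\mu_E$. The jump relation for the normal derivative across $\p E$ is therefore $\p_\nu U^\mu_{\mathrm{out}}-\p_\nu U^\mu_{\mathrm{in}}=-2\pi f_E$. Since the inner normal derivative is $0$, the exterior gradient is $-2\pi f_E\,\nu$ at $\mc H^1$-a.e. boundary point. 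The average of the two one-sided gradients is then $-\pi f_E\nu$. Substituting,
\[
DI(0)[k]=2\int_{\p E}(-\pi f_E\,\nu\cdot k)f_E\,d\mc H^1=-2\pi\int_{\p E}f_E^2\,k\cdot\nu\,d\mc H^1,
\]
which is the claimed formula. This also shows why the exponent $p>2$ is needed: it makes $f_E^2\in L^1$.

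The main obstacle is the rigorous version of this jump/principal-value step on a merely convex, hence Lipschitz, boundary. The claim to prove is that
\[
\mathrm{p.v.}\int\frac{x-y}{|x-y|^2}f_E(y)\,d\mc H^1(y)=\pi f_E(x)\nu(x)
\]
for a.e. $x\in\p E$, with enough integrability to pair it against $f_E\,k$. I would try first to avoid pointwise boundary values altogether. Apply the divergence theorem on $\R^2\setminus E$ minus a large disc, to the vector field $|\nabla U|^2k-2(\nabla U\cdot k)\nabla U$ with $U=U^\mu$. Since $U$ is harmonic outside $E$, the divergence of this field involves only $\nabla k$. The exterior boundary term becomes $\int_{\p E}|\nabla U|^2 k\cdot\nu$ with $|\nabla U|=2\pi f_E$, using that $\nabla U$ is normal there. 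To make this precise, I would approximate $E$ by smooth convex sets $E_j\downarrow E$ and pass to the limit using nontangential convergence of $\nabla U$. The $L^p$ bound with $p>2$ from \cite{goldman2018minimizers} supplies the uniform integrability needed in that limit.
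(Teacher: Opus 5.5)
Your formal computation matches the paper's proof. You truncate to $\{|x-y|>\eps\}$ and use antisymmetry to write $DI(0)[k]=-2\lim_{\eps\downarrow0}\int_{\p E}A_\eps\cdot k\,f_E\,d\mc H^1$, where $A_\eps(x)=\int_{\p E\cap\{|x-y|>\eps\}}\frac{x-y}{|x-y|^2}f_E(y)\,d\mc H^1(y)$. You then get $\nabla U^\mu=0$ in $\mathrm{int}(E)$ from Frostman and read off $A=\pi f_E\nu$ from the jump relation. Your signs and constants are right. The gap is the step you flag yourself, and it carries all the analysis.

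Your first justification for splitting the integral is false. The bound $f\in L^p$ with $p>2$ does not make $|x-y|^{-1}$ integrable against $f\otimes f$ on a curve: already for $f\equiv1$ on a segment, $\int_0^1\int_0^1|s-t|^{-1}\,ds\,dt=\infty$. The pieces containing $k(x)$ and $k(y)$ separately are Hilbert-transform-type singular integrals; only the difference $k(x)-k(y)$ makes the kernel bounded. So you must work with the truncations, and letting $\eps\downarrow0$ requires two facts:
\begin{enumerate}
\item $A_\eps\to A$ a.e.\ and in $L^p(\p E)$, so that the limit can be paired with $f_Ek\in L^p\subset L^{p'}$;
\item this principal value is the average of the two nontangential traces, $(\nabla U^\mu)^\pm=-A\mp\pi f_E\nu$ (exterior/interior).
\end{enumerate}
On a merely Lipschitz curve neither fact is formal. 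The paper takes the first from the Coifman--McIntosh--Meyer theorem on the Cauchy integral, plus the standard maximal-truncation estimates. It takes the second from the jump formulas for single layer potentials (Tolsa). With these two inputs cited, your outline becomes the paper's proof.

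Your proposed workaround via the divergence theorem does not remove this dependence. Your formula $\mathrm{div}\,V=|\nabla U|^2\,\mathrm{div}\,k-2\,\nabla U\cdot(Dk)\nabla U$ off $E$ is correct, and with $\nabla U^+=-2\pi f_E\nu$ the boundary term gives the right constant. But you never connect the divergence theorem to $DI(0)[k]$. What is needed is $DI(0)[k]=-\frac{1}{2\pi}\int_{\R^2\setminus E}\mathrm{div}\,V\,dx$ for a compactly supported Lipschitz extension of $k$. This comes from the stress-tensor identity $\partial_j\bigl(U_iU_j-\tfrac12\delta_{ij}|\nabla U|^2\bigr)=U_i\Delta U$ together with $-\Delta U=2\pi\mu_E$. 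For the singular measure $\mu_E$ it must be justified by mollification and finiteness of the energy.

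Moreover, the limit $E_j\downarrow E$ still uses pointwise boundary values, namely $|\nabla U^+|=2\pi f_E$ and $\nabla U^+\parallel\nu$. It also needs a.e.\ nontangential convergence and a dominating bound $N(\nabla U)\in L^2(\p E)$ on the nontangential maximal function, because $|\nabla U|^2$ has to be controlled on the curves $\p E_j$, not on $\p E$. The bound $f_E\in L^p$ alone does not give this; it is again the Coifman--McIntosh--Meyer layer-potential theory. So this route relocates the deep input rather than avoiding it.
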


    \begin{proof}
        Recall from Proposition \ref{prop:C1Diff} that
        \begin{equation}
            \label{form:DifferentialAt0}
            DI(0)[k]=-\int\int\frac{(x-y)\cdot (k(x)-k(y))}{|x-y|^2}d\mu_E(x)d\mu_E(y).
        \end{equation}
        From Lemma \ref{lem:NovagaRuffiniConvex} for some $p>2$, the equilibrium measure has the form
        \begin{equation}
            \label{equiMeasurePf}
            \mu_E=f_E\mc H^1\llcorner\p E,\quad f_E \in L^p(\p E).
        \end{equation}
        For $\eps>0$ we introduce
        \[
        A_\eps(x)=\int_{\p E\cap\{|x-y|>\eps\}}\frac{x-y}{|x-y|^2}f_E(y)d\mc H^1(y).
        \]
        We compute
        \begin{align*}
            &\int\int_{ (E \times  E) \cap \{|x-y|>\eps\}}\frac{(x-y)\cdot (k(x)-k(y))}{|x-y|^2}d\mu_E(x)d\mu_E(y) \\
            &=\int\int_{ (E \times  E) \cap \{|x-y|>\eps\}}\frac{(x-y)}{|x-y|^2} \cdot k(x) d\mu_E(x)d\mu_E(y) \\
            &- \int\int_{(E \times E) \cap \{|x-y|>\eps\}}\frac{(x-y)}{|x-y|^2} \cdot k(y) d\mu_E(x)d\mu_E(y) \\
            &=2\int_{\p E}A_\eps(x)\cdot k(x) f_E(x) d\mc H^1(x),
        \end{align*}
        where we used the antisymmetry of the kernel and the formula \eqref{equiMeasurePf} in the last equality. In particular,
        \begin{equation}
            \label{eq:Diff0KernelForm}
            DI(0)[k]=-2\lim_{\eps\downarrow 0}\int_{\p E}A_\eps(x)\cdot k(x) f_E(x)d\mc H^1(x).
        \end{equation}
        Since $\p E$ is Lipschitz and $f_E \in L^p(\p E)$ where $p>2$, we have by the Coifman–McIntosh–Meyer Theorem \cite{coifman1982integrale} and standard Calderón-Zygmund estimates that
        \[
        A(x):=\lim_{\eps \downarrow 0}\int_{\p E \cap \{|x-y|>\eps\}}\frac{x-y}{|x-y|^2}f_E(y) d\mc H^1(y),\quad \text{exists for $\mc H^1$ a.e. }x\in \p E
        \]
        and
        \begin{equation}
            \label{eq:pConvOfAeps}
            A_\eps \to A,\quad \text{in }L^p(\p E).
        \end{equation}
        Now introduce the potential $v_E$ which we define as
        \[
        v_E(x)=\int_{\p E}\log\left(\frac{1}{|x-y|}\right)f_E(y)d\mc H^1(y).
        \]
        Hence
        \[
        -\Delta v_E = 2\pi \mu_E,\quad \text{in the sense of distributions.}
        \]
        By Frostman's Theorem (see \cite[Theorem 3.3.4]{ransford1995potential}) it follows that $v_E\equiv I(E)$ in $E$ up to polar sets. In particular, since $\mu_E$ is supported on $\p E$ we have that 
        \begin{equation}
            \label{eq:ConstancyOfLogPot}
            v_E\equiv I(E),\quad \text{in }\mathrm{int}(E).
        \end{equation}
        By the jump formulas for the single layer potential (see \cite[Theorem 1.1]{tolsa2020jump} for example) we have that
        \begin{equation}
            \label{eq:jumpForm}
            (\nabla v_E)^{\pm}(x)=-A(x) \mp\pi f_E(x)\nu(x),\quad \text{for $\mc H^1$ a.e. }x\in \p E,
        \end{equation}
        where $(\nabla v_E)^-$ denotes the interior and $(\nabla v_E)^+$ the exterior trace. From \eqref{eq:ConstancyOfLogPot} it follows that $(\nabla v_E)^-=0$ and therefore \eqref{eq:jumpForm} yields
        \begin{equation}
            \label{eq:FormulaForA}
            A=\pi f_E\nu,\quad \text{a.e. on }\p E.
        \end{equation}
        Finally, by the convergence \eqref{eq:pConvOfAeps}, and since we have $f_E k \in L^p \subset L^{p'}$ for $p>2$ we can use Hölder to pass to the limit
        \begin{equation*}
            \lim_{\eps\downarrow 0}\int_{\p E}A_\eps(x)\cdot k(x) f_E(x)d\mc H^1(x)=\int_{\p E}A(x)\cdot k(x)f_E(x)d\mc H^1(x).
        \end{equation*}
        Combining \eqref{eq:FormulaForA} with \eqref{eq:Diff0KernelForm} we obtain
        \[
        DI(0)[k]=-2\pi \int_{\p E}f_E^2 k \cdot \nu d\mc H^1,
        \]
        which completes the proof.
    \end{proof}

    \subsection{Proof of the regularity theorem}
    Let $\mb T:=\R/2\pi\mb Z$. Given a compact convex set $E\subset \R^2$ after a possible translation, we can associate as in \cite{lamboley2012regularity} a positive function $u \in W^{1,\infty}(\mb T)$, such that
    \[
    E=E_u:=\left\{re^{i\theta}: 0\leq r\leq \frac{1}{u(\theta)}\right\}.
    \]
    It is well-known that
    \[
    E_u \text{ is convex} \iff u''+u\geq 0 \quad \text{in the sense of distributions}.
    \]
    With the formulation above, we can work with functions instead of sets when dealing with problems of planar shape optimization.\\
    The central abstract ingredient is \cite[Theorem 2.4]{lamboley2012regularity}, which we state here for the convenience of the reader. \\
    Let $u_0 \in W^{1,\infty}(\T)$, with $u_0>0$ a minimizer of
    \[
    \min\{j(u): u \in W^{1,\infty}(\mb T), u>0,u''+u\geq 0\}.
    \]
    \begin{theorem}[\cite{lamboley2012regularity}]
        \label{thm:Lamboley}
        Assume that the functional $j$ is given as
        \[
        j(u)=r(u)+\int_{\mb T}G(\theta,u(\theta),u'(\theta))d\theta,
        \]
        with the following properties.
        \begin{enumerate}
            \item $r:W^{1,\infty}(\mb T)\to \R$ is $C^1$ near $u_0$.
            \item $G(\theta,u,q)$ is $C^2$ near $\mb \T \times u_0(\mb T)\times \mathrm{Conv}(u_0'(\mb T))$, where $\mathrm{Conv}(u_0'(\T))$ is the smallest closed interval containing the values of the right- and left-derivatives $u_0'(\theta^+)$, $u_0'(\theta^-)$ for all $\theta \in \mb T$.
            \item For some $p \in [1,\infty]$ the derivative $r'(u_0)$ is represented by an $L^p$ density that is
            \[
            r'(u_0)v =\int_{\mb T}\rho(\theta)v(\theta)d\theta,\quad \rho \in L^p(\mb T).
            \]
            \item $G_{qq}(\theta,u,q)>0$ on the relevant neighborhood of $\mb \T \times u_0(\mb T)\times \mathrm{Conv}(u_0'(\mb T))$.
        \end{enumerate}
        Then
        \[
        u_0\in W^{2,p}(\mb T).
        \]
    \end{theorem}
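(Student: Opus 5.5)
The plan is to derive a first-order optimality condition for $u_0$ that is compatible with the one-sided constraint $u''+u\geq 0$. I will then convert the resulting bound on the distribution $\frac{d}{d\theta}G_q(\theta,u_0,u_0')$ into a bound on $u_0''$, using $G_{qq}>0$. Write $\mu_0:=u_0''+u_0\geq 0$, which is a nonnegative Radon measure on $\T$.

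\textbf{Step 1: variational inequality.} For any $v\in W^{1,\infty}(\T)$ such that $u_0+tv$ is admissible for all small $t>0$, minimality gives $j'(u_0)v\geq 0$. By assumptions (i)--(iii) this reads
\[
\int_{\T}\bigl(\rho+G_u(\theta,u_0,u_0')\bigr)v\,d\theta+\int_{\T}G_q(\theta,u_0,u_0')\,v'\,d\theta\;\geq\;0 .
\]
Writing $\Lambda:=\rho+G_u-\bigl(G_q(\cdot,u_0,u_0')\bigr)'$ in the sense of distributions, this says $\langle\Lambda,v\rangle\geq0$ for all admissible directions $v$. Admissible directions include every $v$ with $v''+v\geq0$. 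They also include every $v$ with $v''+v\geq-\mu_0/t_0$ for some $t_0>0$, since then $(u_0+tv)''+(u_0+tv)\geq(1-t/t_0)\mu_0\geq0$ for $t\leq t_0$. Positivity of $u_0+tv$ is automatic for small $t$ because $u_0>0$ is continuous.

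\textbf{Step 2: two-sided test directions.} The first family above (those with $v''+v\geq 0$) yields only a one-sided inequality. The key is to exploit the second family. Fix an arc $I\subset\T$ and a bounded Borel function $0\leq\varphi\leq1$ supported in $I$. I would solve
\[
v''+v=-\varphi\,\mu_0+a\cos\theta+b\sin\theta\cdot(\ldots)
\]
on $\T$. Here the constants (more precisely, a small nonnegative correction $c_1\mathbf 1_{J}+\dots$ supported on a fixed arc $J$ disjoint from $I$) are chosen to enforce the solvability conditions $\int(v''+v)\cos=\int(v''+v)\sin=0$. This gives $v\in W^{1,\infty}$ with $v''+v\geq-\mu_0$ on $I$, so $v$ is admissible. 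Testing Step 1 with such $v$, and with the free directions, leads to an inequality of the form
\[
\Bigl|\bigl(G_q(\cdot,u_0,u_0')\bigr)'\Bigr|(I)\;\lesssim\;\int_I|\rho|+|G_u|\,d\theta+\text{(corrections)} .
\]
Heuristically, on the support of $\mu_0$ the constraint acts only in one direction, while off the support both directions are free. Combining the two gives a two-sided bound $|(G_q)'|\leq C(|\rho|+1)$ as measures. It follows that $\bigl(G_q(\theta,u_0,u_0')\bigr)'\in L^p(\T)$.

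\textbf{Step 3: bootstrap.} Since $u_0\in W^{1,\infty}$, the one-sided derivatives exist and lie in $\mathrm{Conv}(u_0'(\T))$, so $G$ is $C^2$ there by (ii). Expanding the derivative formally gives
\[
\bigl(G_q\bigr)'=G_{q\theta}+G_{qu}u_0'+G_{qq}u_0'' .
\]
To justify this with $u_0''$ a priori only a measure, I will use a chain rule for BV functions. The one-sided limits of $u_0'$, together with $G_{qq}>0$ (assumption (iv)) and the strict monotonicity of $q\mapsto G_q$, show that $u_0'$ has no jumps. Hence $u_0''=G_{qq}^{-1}\bigl((G_q)'-G_{q\theta}-G_{qu}u_0'\bigr)\in L^p$, that is, $u_0\in W^{2,p}(\T)$.

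The main obstacle is Step 2. The delicate points are constructing admissible perturbations that subtract a localized portion of $\mu_0$ while respecting the two periodicity (solvability) conditions of $v''+v$, and controlling the correction terms uniformly in $I$. The first thing I would try is to place the compensating mass on a fixed arc where $\mu_0$ is positive and to use the free scaling direction $v=\pm u_0$.
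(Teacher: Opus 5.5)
The paper does not prove this statement. It quotes it from Lamboley--Novruzi--Pierre, where it is derived from the first-order optimality condition written with a Lagrange multiplier for the constraint $u''+u\geq 0$.

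Your Steps 1 and 3 are sound; Step 3 is exactly how one reads off $u_0''$ once a measure bound is available. Step 2, which you rightly single out, has a genuine gap, and its heuristic is reversed. Where $\mu_0=0$ one has $u_0=a\cos+b\sin$ locally, so there is nothing to prove there, and admissible directions there are one-sided ($v''+v\geq 0$). It is on $S:=\mathrm{supp}\,\mu_0$ that both signs are available, within the room that $\mu_0$ provides.

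Your directions $v''+v=-\varphi\mu_0+c$ can only be used with $t>0$. (A nonnegative $c$ on a single arc $J$ also cannot match arbitrary first moments.) Moreover, $j'(u_0)v=\int(\rho+G_u)v+G_qv'$ sees $v$ on all of $\T$, since $v$ is a nonzero combination of $\cos,\sin$ between $I$ and $J$. So nothing in this isolates $|(G_q)'|(I)$. Finally, the two sides of your bound have different origins: optimality only gives an upper bound on $S$, and the lower bound must come from the constraint.

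The missing idea is the multiplier, which you can build as follows.
\begin{enumerate}
\item Since $\pm\cos,\pm\sin$ are admissible, $\Lambda:=\rho+G_u-(G_q)'$ annihilates $\ker(\partial^2+1)$. Hence $\Lambda=\xi''+\xi$ for some $\xi\in W^{1,\infty}(\T)$ with $\xi'\in\mathrm{BV}$, and $j'(u_0)v=\int\xi\,d(v''+v)$.
\item Take $v''+v=\pm(\varphi-\psi_\varphi)\mu_0$, absorbing the moment correction $\psi_\varphi\in L^\infty(\mu_0)$ into $\mu_0$ itself. This is possible because $u_0>0$ prevents $S$ from lying in an antipodal pair. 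Both signs are then admissible, so after subtracting an element of $\ker(\partial^2+1)$ from $\xi$ you get $\xi=0$ on $S$.
\item Testing with $v''+v=\nu-\psi_\nu\mu_0$ for arbitrary $\nu\geq 0$ then gives $\xi\geq 0$.
\item Since $\xi$ is minimal at each point of $S$, $\xi''\llcorner S\geq 0$: the atoms are $\xi'(\theta^+)-\xi'(\theta^-)\geq 0$, and the diffuse part does not charge $\{\xi'=0\}$. Hence $(G_q)'\llcorner S\leq(\rho+G_u)\llcorner S$.
\item The lower bound comes for free from convexity. Since $u_0'$ jumps only upward and $u_0''\geq -u_0$, the BV chain rule and $G_{qq}\geq c>0$ give $(G_q)'\geq c\,\mu_0-C\,d\theta$.
\end{enumerate}
Combining the two bounds gives $c\,\mu_0\leq(|\rho|+C)\,d\theta$. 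So $\mu_0$ has no atoms, $\mu_0\in L^p$, and $u_0''=\mu_0-u_0\in L^p$.
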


    \begin{proof}[Proof of Theorem \ref{thm:HalphaReg}]
        The proof reduces to the verification of the assumptions in Theorem \ref{thm:Lamboley}. In the following fix $\alpha \in [0,3/2)$.\\
        \textbf{Step 1: Setup.} Recall the gauge formulation and write sets as
        \[
        E_u=\left\{(r,\theta): 0\leq r\leq \frac{1}{u(\theta)}\right\}.
        \]
        Suppose $E_{u_0}$ is the maximizer for $H_\alpha$ among planar convex sets. We use the equality $\log \cp(E_u)=-I(E_u)$ to denote
        \begin{align*}
            j(u)&=-\log H_\alpha(E_u) \\
            &=(3-2\alpha)\log P(E_u)+\alpha\log |E_u|-\frac{1}{2}\log T(E_u)+I(E_u)\\
            &=r(u)+ \int_{\T}G(u(\theta),u'(\theta))d\theta,
        \end{align*}
        where we define
        \begin{equation}
            \label{eq:FormulaForR}
            r(u):= (3-2\alpha)\left(\log P(E_u)-\frac{P(E_u)}{P(E_{u_0})}\right)+\alpha\log |E_u|-\frac{1}{2}\log T(E_u)+I(E_u),
        \end{equation}
        and
        \[
        G(u,q):=\frac{3-2\alpha}{P(E_{u_0})}\frac{\sqrt{u^2+q^2}}{u^2}.
        \]
        Note that
        \[
        \frac{(3-2\alpha)}{P(E_{u_0})}P(E_u)=\int_{\T}G(u(\theta),u'(\theta))d\theta,
        \]
        see for example equation (26) in \cite{lamboley2012regularity}.
        Moreover, one easily checks that
        \[
        G_{qq}(u,q)= \frac{3-2\alpha}{P(E_{u_0})}\frac{1}{(u^2+q^2)^{3/2}}>0,
        \]
        as $\alpha <3/2$.\\
        \textbf{Step 2: Integrability.} We want to show that there exists $s>1$ such that $r'(u_0)\in L^s(\mb T)$. We note first that $r$ is $C^1$ in a neighborhood of $u_0$. For all terms in \eqref{eq:FormulaForR} except $I(E_u)$ this is well-known: For the torsional rigidity see \cite[Proposition 3.13]{lamboley2012regularity}. For the area term it follows easily from equation (26) in \cite{lamboley2012regularity}. Finally, for $u \mapsto I(E_u)$ this is Proposition \ref{prop:C1Diff} combined with a standard construction of a deformation $u \mapsto \xi(u)$, see equations (27)-(29) in \cite{lamboley2012regularity}. Moreover, note that the perimeter does not contribute to $r'(u_0)$.\\
        Let $v \in W^{1,\infty}(\mb T)$. Denote
        \[
        x_\theta=\frac{1}{u_0(\theta)}e^{i\theta},\quad k(x_\theta)=-\frac{v(\theta)}{u_0(\theta)^2}e^{i\theta}.
        \]
        By Lemma \ref{lem:FormulaAtConvexSet} we have
        \begin{equation}
            DI(0)[k]=-2\pi\int_{\mb T}f_{E_0}^2k\cdot \nu d\mc H^1=2\pi\int_{\mb T}f_{E_0}(x_\theta)^2\frac{v(\theta)}{u_0(\theta)^3}d\theta.
        \end{equation}
        Following \cite[Proposition 3.13]{lamboley2012regularity} we have that
        \[
        r'(u_0)v=\int_{\mb T}\rho(\theta)v(\theta)d\theta,
        \]
        where
        \begin{equation}
            \label{formulaRho}
            \rho(\theta):=-\frac{\alpha}{|E_{u_0}|u_0(\theta)^3}+\frac{|\nabla w_0(x_\theta)|^2}{2 T(E_{u_0})u_0(\theta)^3}+2\pi \frac{f_{E_{u_0}}(x_\theta)^2}{u_0(\theta)^3}.
        \end{equation}
        In \eqref{formulaRho}, $w_0$ denotes the torsion function for $E_{u_0}$. In particular, since the first two terms above are in $L^\infty(\mb T)$ and the density $f_{E_0}(x_\theta)$ is in $L^p(\mb T)$ for some $p>2$ we have that $\rho \in L^{s}(\mb T)$ for $s:=\frac p2>1$. This shows that $r'(u_0)$ has the desired integrability.\\
        \textbf{Step 3: Hölder regularity.} By Theorem \ref{thm:Lamboley} it follows that $u_0 \in W^{2,s}(\mb T)$. From the Sobolev embedding
        \[
        W^{2,s}(\mb T)\hookrightarrow C^{1,1-\frac1s}(\mb T),
        \]
        hence $\p E_{u_0} \in  C^{1,1-\frac1s}$.\\
        \textbf{Step 4: Bootstrap.} To complete the proof of $C^{1,1}$ regularity, one can observe that as soon as $\p E_{u_0}\in C^{1,\beta}$ for some $\beta \in (0,1)$ one obtains that $f_{E_{u_0}} \in L^\infty(\mb T)$ see \cite[Lemma 3.6]{goldman2018minimizers}. In particular, from \eqref{formulaRho} we have that $\rho \in L^\infty(\mb T)$. Then applying again Theorem \ref{thm:Lamboley} with $p=\infty$ we have $u_0\in C^{1,1}(\mb T)$ and hence $\p E_{u_0} \in C^{1,1}$.
    \end{proof}

    \begin{remark}
        In a similar way, we can give an alternative proof of Theorem 1.2 in \cite{goldman2018minimizers}. The authors study minimizers of the planar functional
        \[
        F_Q(E):=P(E)+Q^2I(E),
        \]
        among planar convex sets $E$ under a volume constraint and prove their $C^{1,1}$-regularity. Their proof is based on a geometric cap-cut construction. Since Proposition \ref{prop:C1Diff} and Lemma \ref{lem:FormulaAtConvexSet} provide exactly the missing ingredients for the framework of Theorem \ref{thm:Lamboley}, one can avoid the geometric construction of their proof.
    \end{remark}

    \medskip

    \noindent\textbf{AI Disclosure.}
    ChatGPT 5.6 Sol was used to review previous drafts of this
    work for typos and mathematical mistakes, and to assist with numerical experiments. The author reviewed and edited the content as needed and takes full responsibility for the content of this work.

	\let\oldthebibliography\thebibliography
	\let\endoldthebibliography\endthebibliography
	\renewenvironment{thebibliography}[1]{
	\begin{oldthebibliography}{#1}
	\setlength{\itemsep}{0.5pt}
	\setlength{\parskip}{0.5pt}
	}
	{
	\end{oldthebibliography}
	}

\end{document}